\newtheorem{theorem}{Theorem}[section]
\newtheorem{lemma}[theorem]{Lemma}
\newtheorem{corollary}[theorem]{Corollary} 
\newtheorem{definition}[theorem]{Definition} 
\newtheorem{remark}[theorem]{Remark}
\newtheorem{example}[theorem]{Example}
\newtheorem{proposition}[theorem]{Proposition}
\newcommand{\cG}{\mathcal{G}}
\newcommand{\id}{\textrm{id}}
\title{Panov's theorem for weak Hopf algebras}
\subjclass[2000]{}
\keywords{}
\author{Christian Lomp}
\address{Department of Mathematics of the Faculty of Science and  CMUP, University of Porto, Rua Campo Alegre, 687, 4169-007 Porto, Portugal}
\email{clomp@fc.up.pt}
\author{Alveri Sant’Ana}
\author{Ricardo Leite dos Santos}
\address{Instituto de Matemática e Estatística, Universidade Federal do Rio Grande do Sul, Av. Bento Gonçalves, 9500, 91.509-900, Porto Alegre, Brazil}
\email{alveri@mat.ufrgs.br}
\email{rilesantos@gmail.com}
\thanks{The first author is a member of CMUP (UID/MAT/00144/2013), which is funded by FCT with national (MEC) and European structural funds (FEDER), under the partnership agreement PT2020.}
\thanks{This work emerged out of the project ``Internacionalização da Pós-Graduação no Rio Grande do Sul", financed by FAPERGS, and is part of the last author's doctoral thesis. The last author was partially supported by Capes, Brazil. Some results of this work were obtained during the last author's visit to the University of Porto. He would like to thank the Mathematics Department of the Faculty of Science of the University of Porto for its hospitality. 
}
\begin{document}
\maketitle

\begin{abstract}
Panov proved necessary and sufficient conditions to extend the Hopf algebra structure of an algebra $R$ to an Ore extension $R[x;\sigma,\delta]$ with $x$ being a skew-primitive element. In this paper we extend Panov's result to Ore extensions over weak Hopf algebras. As an application we study Ore extensions of connected groupoid algebras.
\end{abstract}

\section{Introduction}

In \cite{Panov}, Panov found necessary and sufficient conditions for an Ore extension  $H=R[x;\sigma,\delta]$ over a Hopf algebra $R$, to have a structure of a Hopf algebra which extends the Hopf algebra structure of $R$ and such that $x$ is a skew-primitive element, i.e. $\Delta(x)=g\otimes x + x \otimes h$, for some $g,h\in R$. 
Later in \cite{Brown}, Brown et al. extended Panov's result by allowing $x$ to be more general.
In this paper we extend Panov's characterisation to Ore extensions of weak Hopf algebras, where by weak Hopf algebras we mean a generalisation of Hopf algebras in the sense of Böhm et al. \cite{Boehm} see also \cite{BrzezinskiWisbauer, NikshychVainerman}. A weak Hopf algebra $R$ is a (unital, associative) algebra and a (counital, coassociative) coalgebra, with counit $\epsilon$ and comultiplication $\Delta$, satisfying certain conditions. The difficulty for Panov's characterisation to carry over to weak Hopf algebras lies in the fact that the counit $\epsilon$ of a weak Hopf algebra $R$ is not multiplicative and that the comultiplication $\Delta$ does not map $1$ to $1\otimes 1$. Indeed it is well-known that a weak Hopf algebra is a Hopf algebra if and only if $\Delta(1)=1\otimes 1$ if and only if $\epsilon$ is an algebra map (see \cite[page 391]{Boehm}). Hence if $R$ is a weak Hopf algebra over a field $k$ and $x$ is a primitive element (in the usual sense), i.e. $\Delta(x)=1\otimes x + x \otimes 1$, then the coassociativity implies 
$$\Delta(1)\otimes x + 1\otimes x \otimes 1 + x\otimes 1\otimes 1 =  1\otimes 1\otimes x +  1\otimes x \otimes 1 + x\otimes \Delta(x)$$
If $x$ and $1$ are linearly independent, then comparing the tensorands on both sides implies  $\Delta(1)=1\otimes 1$, which shows that $R$ is a Hopf algebra. Note that the counity implies $\epsilon(x)=0$. Hence $x$ and $1$ are only linearly dependent if $x=0$. Thus we will not be able of extending Panov's characterisation with the ordinary definition of a primitive element.

Recall that if $R$ is a Hopf algebra, then $R^*$ is an algebra with the convolution product and $R$ is an $R^*$-bimodule with the left and right action given by $ f\cdot a := a_1f(a_2)$ and $a\cdot f := f(a_1)a_2$ for any $a\in R$ and $f\in R^*$. Let $X(R)$ denote the set of characters of $R$, i.e. the set of $\chi \in R^*$ that are algebra maps. Following  \cite[I.9.25]{BrownGoodearl}, the left (resp. right) winding automorphisms of $R$ are the automorphisms $\tau_\chi^l$ (resp. $\tau_\chi^r$) given by
$\tau_\chi^l(a) = a\cdot \chi = \chi(a_1)a_2$ (resp. $\tau_\chi^r(a)=\chi\cdot a = a_1\chi(a_2)$), for $a\in R$.

In \cite[Theorem 1.3]{Panov}, Panov showed that an Ore extension $H=R[x;\sigma,\delta]$ of a Hopf algebra $R$ carries also a Hopf algebra structure which extends the one of $R$ and such that $x$ is skew-primitive with $\Delta(x)=g\otimes x + x\otimes 1$ for some group-like element $g\in R$ if and only if there exists a character $\chi:R\rightarrow k$ such that 
$\sigma = \tau_\chi^r = Ad_g\tau_\chi^l$ and $\Delta(\delta(a)) = ga_1\otimes \delta(a_2) + \delta(a_1)\otimes a_2$
for all $a\in R$. Here $Ad_g$ is the conjugation by $g$, i.e. $Ad_g(a)=gag^{-1}$ for all $a\in R$.

Note that while any element $g$ in a Hopf algebra $H$ that satisfy $\Delta(g)=g\otimes g$ is a group-like element, and hence invertible, the identity maps $e_x$, for an object $x$ of a groupoid $\cG$, satisfy $\Delta(e_x)=e_x \otimes e_x$, but are not invertible elements in the groupoid algebra $k[\cG]$ unless $\cG$ has only one object and hence $k[\cG]$ is actually a group algebra and therefore a ordinary Hopf algebra. This means that additional care has to be taken in defining group-like and skew-primitive elements for weak Hopf algebras.

\section{Group-like elements in and characters of weak bialgebras}

We start this section recalling the concepts of weak bialgebras and weak Hopf algebras. For more details and properties of these structures we refer \cite{Boehm,NikshychVainerman}. 

\begin{definition}\label{def_weakbialgebra} A weak bialgebra is a $k$-vector space $R$ with the structures of an associative unital algebra and a coassociative counital coalgebra with comultiplication $\Delta$ and counit $\epsilon$ such that:
\begin{itemize}
\item[(i)] The comultiplication $\Delta$ is a multiplicative $k$-linear map such that
$$(\Delta \otimes \emph{id})(\Delta(1)) = (\Delta(1) \otimes 1)(1 \otimes \Delta(1)) = (1 \otimes \Delta(1))(\Delta(1) \otimes 1);$$
\item[(ii)] The counit $\epsilon$ is a $k$-linear map which is weak multiplicative in the sense of
$$\epsilon(fgh) = \epsilon(fg_1)\epsilon(g_2h) = \epsilon(fg_2)\epsilon(g_1h), \ \textrm{ for all } f, g, h \in R.$$
\end{itemize}
\end{definition}
We recall the linear endomorphisms $\epsilon_t, \epsilon_s, \epsilon_t'$ and $\epsilon_s'$ on a weak bialgebra $R$:
$$\epsilon_t(r)= \epsilon(1_1r)1_2, \qquad \epsilon_s(r)= \epsilon(r1_2)1_1, \qquad \epsilon_t'(r)= \epsilon(r1_1)1_2, \qquad \epsilon_s'(r)= \epsilon(1_2r)1_1,$$
for all $r\in R$, where we are using the Sweedler notation without summation symbol $\Delta(r) = r_1\otimes r_2$, for any $r\in R$. Moreover, any weak bialgebra $R$ has the two subalgebras
$$R_s := \mathrm{Im}(\epsilon_s) = \{a\in R \mid\Delta(a)=1_1\otimes a1_2 \}, \qquad R_t := \mathrm{Im}(\epsilon_t) = \{a\in R \mid\Delta(a)=1_1a\otimes 1_2 \},$$
which are separable algebras over the base field $k$ (see \cite[36.6, 36.8]{BrzezinskiWisbauer}).
\begin{definition}
 A weak Hopf algebra is a weak bialgebra $R$ with multiplication $\mu$ and comultiplication $\Delta$, such that there exists a linear map $S: R \rightarrow R$, named antipode, that satisfies $$\epsilon_t=\mu (id\otimes S) \Delta, \qquad \epsilon_s=\mu (S\otimes id) \Delta \qquad \mbox{ and } \qquad S(r_1)r_2S(r_3) = S(r), \mbox{ for all } r\in R.$$
\end{definition}

 Thus, if $R$ is a weak Hopf algebra with antipode $S$, then $\epsilon_t(r)=r_1S(r_2)$ and $\epsilon_s(r)=S(r_1)r_2$ holds. Moreover, using the weak counity property \ref{def_weakbialgebra}(ii) one easily deduces for any $a,b\in R$:
\begin{equation}\label{wcounit}\epsilon(ab)=\epsilon(a\epsilon_t(b))=\epsilon(a\epsilon_s'(b))=\epsilon(\epsilon_t'(a)b)=\epsilon(\epsilon_s(a)b)\end{equation}
Hence if $\epsilon_t(b)=1$, then $\epsilon(ab)=\epsilon(a)$ and if $\epsilon_s(a)=1$ then $\epsilon(ab)=\epsilon(b)$.  Using equation (\ref{wcounit}) and induction one proves:

\begin{lemma}\label{property_et1}
Let $R$ be a weak bialgebra and $g\in R$. Then:
\begin{itemize}
\item[(i)] $\epsilon_t(g)=1$ if and only if $\epsilon(ag^n)=\epsilon(a)$, for any $a\in R$ and $n\geq 0$;
\item[(ii)] $\epsilon_s(g)=1$ if and only if $\epsilon(g^na)=\epsilon(a)$, for any $a\in R $and $n\geq 0$.
\end{itemize}
\end{lemma}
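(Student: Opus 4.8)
The plan is to derive both parts from equation (\ref{wcounit}) together with an easy induction, carrying out (i) in detail and obtaining (ii) by the left–right mirror argument. I would start by isolating the two instances of (\ref{wcounit}) that do the work, namely $\epsilon(ab)=\epsilon(a\epsilon_t(b))$ and $\epsilon(ab)=\epsilon(\epsilon_s(a)b)$, which specialise respectively to $\epsilon(ab)=\epsilon(a)$ when $\epsilon_t(b)=1$ and to $\epsilon(ab)=\epsilon(b)$ when $\epsilon_s(a)=1$ — this is exactly the observation recorded just before the lemma.

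For the forward direction of (i), assuming $\epsilon_t(g)=1$, I would induct on $n$: the base case $n=0$ is just $\epsilon(a1)=\epsilon(a)$, and for $n\geq 1$ I would write $ag^n=(ag^{n-1})g$, apply $\epsilon(xy)=\epsilon(x\epsilon_t(y))$ with $x=ag^{n-1}$ and $y=g$ to collapse the trailing $g$ using $\epsilon_t(g)=1$, and then invoke the induction hypothesis for the element $a$ with exponent $n-1$. For the converse I expect that one needs the hypothesis only in the single case $n=1$, i.e. $\epsilon(ag)=\epsilon(a)$ for all $a\in R$; applying this to each first leg of $\Delta(1)$ inside the Sweedler expression $\epsilon_t(g)=\epsilon(1_1 g)1_2$ gives $\epsilon_t(g)=\epsilon(1_1)1_2=1$ by the counit axiom $(\epsilon\otimes\mathrm{id})\Delta(1)=1$. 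Statement (ii) is the same argument with $\epsilon(xy)=\epsilon(\epsilon_s(x)y)$ in place of $\epsilon(xy)=\epsilon(x\epsilon_t(y))$, factoring the leading $g$ instead of the trailing one, and with $\epsilon_s(g)=\epsilon(g1_2)1_1=\epsilon(1_2)1_1=1$ for the converse, via $(\mathrm{id}\otimes\epsilon)\Delta(1)=1$.

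The computation is short and I do not anticipate a serious obstacle; the only spot that deserves a moment's care is the converse, where I must check that substituting the hypothesis $\epsilon(ag)=\epsilon(a)$ termwise inside the expression $\epsilon(1_1 g)1_2$ is legitimate — it is, because the hypothesis holds for every element of $R$, in particular for each first-leg component occurring in $\Delta(1)$ — and that in that direction the full range $n\geq 0$ is part of the conclusion, not of the hypothesis.
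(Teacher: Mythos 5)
Your proof is correct and follows exactly the route the paper indicates (the lemma is stated there with only the remark ``Using equation (\ref{wcounit}) and induction one proves''): the forward direction by induction via $\epsilon(ab)=\epsilon(a\epsilon_t(b))$, and the converse by linearity of the $n=1$ hypothesis applied to the legs of $\Delta(1)$ together with the counit axiom. No gaps.
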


The same is true if we change $\epsilon_t$ by $\epsilon_s^{\prime}$ in $(i)$ and $\epsilon_s$ by
$\epsilon_t^{\prime}$ in $(ii)$ of Lemma above.

\subsection{Weak group-like Elements} As mentioned before, any element $g$ in a Hopf algebra $H$ that satisfy $\Delta(g)=g\otimes g$ is a group-like element, and hence invertible. However this might not be anymore the case for weak Hopf algebras and therefore additional care has to be taken in defining group-like elements for weak Hopf algebras.

\begin{definition}[{cf. \cite[Definiton 4.1]{Nikshych}}]
An element $g$ of a weak bialgebra is called {\it weak group-like} if $\Delta(g)=\Delta(1)(g\otimes g)$ and $\Delta(g)= (g\otimes g)\Delta(1)$ hold. A {\it group-like element} is an invertible weak group-like element.
\end{definition}

If $R$ is a weak bialgebra, then we denote the set of weak group-like elements of $R$ by $G_{w}(R)$.  Thus, $G(R):=G_w(R)\cap \mathcal{U}(R)$, is the set of group-like elements of $R$, where $\mathcal{U}(R)$ denotes the unit group of $R$.
Note that $1$ is always a group-like element in any weak bialgebra. Furthermore it is easy to check that $G_w(R)$ is  a monoid with the multiplication of $R$ and that $G(R)$ is a subgroup of $\mathcal{U}(R)$.

\begin{example}\label{example_matrix}
Let $\cG=(\cG_0,\cG_1)$ be a groupoid with $|\cG_0|$ finite. Let $R=k\cG_1$ be its groupoid algebra, which is a weak Hopf algebra. Then any element $g\in \cG_1$ satisfies  $\Delta(g)=g\otimes g$ and  as $\Delta(g)=\Delta(1)\Delta(g)=\Delta(g)\Delta(1)$, $g$ is a weak group-like element. A particular groupoid algebra is $R=M_n(k)$, the $n\times n$ matrix ring over a field $k$. Here $|\cG_0|=n$ and for any $i,j\in \cG_0$ there exist exactly one morphism $E_{ij}$ from $i$ to $j$. Clearly $E_{ii}$ are the identity maps and it is easy to check that $E_{ij}E_{st}=\delta_{j,s}E_{it}$ satisfies the ordinary multiplication rule of the elementary units of the $n\times n$ matrix ring $M_n(k)$. 
Thus we can identify $R=k\cG_{1}$ with $M_n(k)$ such that 
$$\Delta(E_{ij})=E_{ij}\otimes E_{ij}, \qquad \epsilon(E_{ij})=1, \qquad \mbox{and} \qquad S(E_{ij})=E_{ji}.$$ 
In particular, the monoid $G_w(R)$ of weak group-like elements of $R=M_n(k)$ is given by 
$$G_w(M_n(k)) = \left\{ \sum_{i\in I} E_{i\sigma(i)} \mid I\subseteq \{1,\ldots, n \}, \:\: \sigma:I\rightarrow \{1,\ldots, n\} \mbox{ injective }\right\}.$$
The group-like elements are precisely the permutation matrices, i.e. the elements of the form $g_\sigma= \sum_{i=1}^n E_{i\sigma(i)}$ for some $\sigma \in S_n$. The assignment $\sigma \mapsto g_\sigma$ is  an isomorphism of groups between the symmetric group $S_n$ and $G(M_n(k))$. 

To see the above equality, we take $\gamma=\sum_{i,j=1}^n \lambda_{ij} E_{ij} \in G_w(R)$. Since $\Delta(\gamma)=\Delta(1)(\gamma\otimes \gamma)$ we have that $\sum_{i,j=1}^n \lambda_{ij} E_{ij}\otimes E_{ij} = \sum_{i,j,t=1}^n \lambda_{ij}\lambda_{it} E_{ij}\otimes E_{it}$. Consequently, we obtain $\lambda_{ij}^2=\lambda_{ij}$ and $\lambda_{ij}\lambda_{it}=0$ whenever $t\neq j$, for all $1\leq i,j,t \leq n$,
and it follows that $\lambda_{ij}\in \{0,1\}$. Moreover, if $\lambda_{ij}\neq 0$ then  $\lambda_{it}=0$, for all $t\neq j$. Set
$I=\{i\in\{1,\ldots, n\}: \exists j: \lambda_{ij}=1\}$. Then we can define a map $\sigma : I \rightarrow \{1,\ldots, n\}$ by $\sigma(i) = j$ if and only if $\lambda_{ij} = 1$ and it follows that $\gamma = \sum_{i\in I} E_{i\sigma(i)}$. The reverse inclusion is clear.
\end{example}

Using the counity properties we have the following easy Lemma:

\begin{lemma}\label{basics_grplke} Let $g$ be  a weak group-like element in a weak bialgebra $R$. Then $g = \epsilon_t(g)g = g\epsilon_s(g).$
If $R$ is a weak Hopf algebra then $\epsilon_t(g)=gS(g)$ and $\epsilon_s(g)=S(g)g$ are idempotents.
\end{lemma}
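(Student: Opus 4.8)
The plan is to read off all four identities directly from the defining axioms, the key observation being that a weak group-like element $g$ satisfies \emph{both} $\Delta(g)=\Delta(1)(g\otimes g)$ and $\Delta(g)=(g\otimes g)\Delta(1)$, and that in each computation one must pick the version in which the copy of $\Delta(1)$ sits on the side where the counit, resp. the antipode, is about to be applied.

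For the two identities valid in an arbitrary weak bialgebra I would only use counitality. Applying $\epsilon\otimes\id$ to $\Delta(g)=\Delta(1)(g\otimes g)=1_1g\otimes 1_2g$ gives $g=(\epsilon\otimes\id)\Delta(g)=\epsilon(1_1g)1_2g=\epsilon_t(g)g$, straight from the definition $\epsilon_t(r)=\epsilon(1_1r)1_2$. Symmetrically, applying $\id\otimes\epsilon$ to $\Delta(g)=(g\otimes g)\Delta(1)=g1_1\otimes g1_2$ gives $g=g1_1\epsilon(g1_2)=g\epsilon_s(g)$, using $\epsilon_s(r)=\epsilon(r1_2)1_1$.

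Now suppose $R$ is a weak Hopf algebra. I would use that $\epsilon_t(r)=\mu(\id\otimes S)\Delta(r)=r_1S(r_2)$ and $\epsilon_s(r)=\mu(S\otimes\id)\Delta(r)=S(r_1)r_2$, the counitality facts $\epsilon_t(1)=1_1S(1_2)=1$ and $\epsilon_s(1)=S(1_1)1_2=1$, and the anti-multiplicativity of $S$ (a standard property of weak Hopf algebras, see \cite{Boehm,BrzezinskiWisbauer}). Then
$$\epsilon_t(g)=\mu(\id\otimes S)\bigl((g\otimes g)\Delta(1)\bigr)=g1_1S(g1_2)=g\bigl(1_1S(1_2)\bigr)S(g)=g\,\epsilon_t(1)\,S(g)=gS(g),$$
and, picking the other weak group-like identity,
$$\epsilon_s(g)=\mu(S\otimes\id)\bigl(\Delta(1)(g\otimes g)\bigr)=S(1_1g)1_2g=S(g)\bigl(S(1_1)1_2\bigr)g=S(g)\,\epsilon_s(1)\,g=S(g)g.$$
Finally, idempotency follows by feeding the first part back in: $\epsilon_t(g)^2=g\bigl(S(g)g\bigr)S(g)=g\,\epsilon_s(g)\,S(g)=gS(g)=\epsilon_t(g)$ since $g\epsilon_s(g)=g$, and likewise $\epsilon_s(g)^2=S(g)\bigl(gS(g)\bigr)g=S(g)\,\epsilon_t(g)\,g=S(g)g=\epsilon_s(g)$ since $\epsilon_t(g)g=g$.

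The computations are entirely routine once the axioms are in hand, so there is no real obstacle; the only point requiring a moment's care is the one flagged above, namely invoking the correct one of the two weak group-like identities so that, after applying anti-multiplicativity of $S$, the two tensor legs of $\Delta(1)$ end up adjacent and collapse to $\epsilon_t(1)=1$ or $\epsilon_s(1)=1$.
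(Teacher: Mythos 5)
Your proof is correct and is essentially the argument the paper intends: the paper offers no written proof beyond the remark that the lemma follows ``using the counity properties,'' and your first two identities are exactly that, obtained by applying $\epsilon\otimes\id$ and $\id\otimes\epsilon$ to the appropriate one of the two defining equations of a weak group-like element. For the weak Hopf algebra part you correctly add the anti-multiplicativity of $S$ (standard, though not stated explicitly in the paper) to get $\epsilon_t(g)=gS(g)$ and $\epsilon_s(g)=S(g)g$, and the idempotency then follows cleanly by feeding $g=\epsilon_t(g)g=g\epsilon_s(g)$ back in; no gaps.
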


It follows from the last Lemma that for any weak group-like element $g$ that has a right inverse $\epsilon_t(g)=\epsilon_s'(g)=1$ holds, while $\epsilon_s(g)=\epsilon_t'(g)=1$ holds if $g$ has a left inverse. Consequently, if $R$ is a weak Hopf algebra, then $S(g)$ is a right inverse of a weak group-like element $g$ if and only if $\epsilon_t(g)=1$ as well as $S(g)$ is a left inverse of $g$ if and only if $\epsilon_s(g)=1$. The example of the matrix units of $R=M_n(k)$ shows that in general $\epsilon_t(g)$ might be different from $1$ for a weak group-like element $g$.

\subsection{Weak Characters}

A character  $\chi$ of a Hopf algebra $R$ is a (unital) algebra homomorphism $\chi:R\rightarrow k$. In particular, characters are group-like elements of the finite dual $R^\circ$ of a Hopf algebra. One particular example of a character of a Hopf algebra is its counit $\epsilon \in R^*$. In the case of a weak bialgebra, the counit is not an algebra homomorphism as it is neither unital nor multiplicative. 
However the notion of a left (respectively right) winding map $\tau_\chi^l$ (resp $\tau_\chi^r$) as mentioned in the introduction (see also  \cite[I.9.25]{BrownGoodearl}) makes sense for weak bialgebra. 

\begin{definition} A weak right character of a weak bialgebra $R$ is a linear map $\chi: R\rightarrow k$ such that $\tau_\chi^r$ is a unital algebra homomorphisms. We denote the set of all weak right characters by $X_w^r(R)$. 
\end{definition}

\begin{lemma}\label{chi_rs_linear_part2} Let $R$ be a weak bialgebra.
Then $X_w^r(R)$ is a submonoid of the multiplicative monoid $R^*$. Moreover, for any $\chi \in X_w^r(R)$:
	\begin{enumerate}
\item $\tau_\chi^r(a)=a$ for any $a\in R_t$ and $\chi\in X_w^r(R)$.
\item If $\chi$ is invertible in $R^*$ with inverse $\chi'$, then $\tau_\chi^r$ is an automorphism of $R$ and $\chi'\in X_w^r(R).$
\item If $\tau_\chi^r$ is an automorphism of $R$, then $\chi$ has a left inverse $\chi'$ in $R^*$.
	\end{enumerate}
\end{lemma}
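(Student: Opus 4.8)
The proof uses only the coalgebra structure of $R$ together with two elementary facts about winding maps, which I would establish first. For any $\chi\in R^*$ one has the identity $\Delta\circ\tau_\chi^r=(\id\otimes\tau_\chi^r)\circ\Delta$: pulling out the scalar and using coassociativity, $\Delta(\tau_\chi^r(a))=\Delta(a_1)\chi(a_2)=(a_1)_1\otimes(a_1)_2\chi(a_2)=a_1\otimes a_2\chi(a_3)=a_1\otimes\tau_\chi^r(a_2)$. Secondly, $\Phi\colon R^*\to\End{k}{R}$, $\chi\mapsto\tau_\chi^r$, is an injective algebra homomorphism from the convolution algebra $(R^*,\ast,\epsilon)$ to $(\End{k}{R},\circ,\id)$: a direct computation with coassociativity gives $\tau_{\chi\ast\psi}^r=\tau_\chi^r\circ\tau_\psi^r$, the counit axiom gives $\tau_\epsilon^r=\id$, and injectivity holds because $\tau_\chi^r=\tau_\psi^r$ implies, on applying $\epsilon$, that $\epsilon(a_1)\chi(a_2)=\epsilon(a_1)\psi(a_2)$ for all $a\in R$, hence $\chi=\psi$ by the counit axiom.

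Granting these, the submonoid claim is immediate: $\tau_\epsilon^r=\id$ is a unital algebra map, so $\epsilon\in X_w^r(R)$; and if $\chi,\psi\in X_w^r(R)$ then $\tau_{\chi\ast\psi}^r=\tau_\chi^r\circ\tau_\psi^r$ is a composite of unital algebra endomorphisms of $R$, hence again such a map, so $\chi\ast\psi\in X_w^r(R)$. For (1), if $a\in R_t$ then $\Delta(a)=1_1a\otimes1_2$, so, pulling the scalars $\chi(1_2)$ to the front, $\tau_\chi^r(a)=1_1\,a\,\chi(1_2)=\big(1_1\chi(1_2)\big)a=\tau_\chi^r(1)\,a=a$, since $\tau_\chi^r$ is unital.

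For (2), assume $\chi\ast\chi'=\epsilon=\chi'\ast\chi$. Applying the algebra homomorphism $\Phi$ gives $\tau_\chi^r\circ\tau_{\chi'}^r=\id=\tau_{\chi'}^r\circ\tau_\chi^r$, so $\tau_\chi^r$ is bijective; being moreover a unital algebra homomorphism, it is an automorphism of $R$, and its inverse $\tau_{\chi'}^r$ is then again a unital algebra automorphism, whence $\chi'\in X_w^r(R)$.

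Finally, for (3) write $\sigma:=\tau_\chi^r$ and assume $\sigma$ is an automorphism. Composing the identity $\Delta\circ\sigma=(\id\otimes\sigma)\circ\Delta$ with $\sigma^{-1}$ on the appropriate sides gives $\Delta\circ\sigma^{-1}=(\id\otimes\sigma^{-1})\circ\Delta$. Put $\chi':=\epsilon\circ\sigma^{-1}\in R^*$; applying $\id\otimes\epsilon$ to the last identity and using the counit axiom yields $\sigma^{-1}(a)=a_1\,\epsilon(\sigma^{-1}(a_2))=a_1\chi'(a_2)=\tau_{\chi'}^r(a)$, i.e. $\sigma^{-1}=\tau_{\chi'}^r$. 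Hence $\tau_{\chi'\ast\chi}^r=\sigma^{-1}\circ\sigma=\id=\tau_\epsilon^r$, and injectivity of $\Phi$ forces $\chi'\ast\chi=\epsilon$, so $\chi'$ is a left inverse of $\chi$ in $R^*$ (in fact $\chi\ast\chi'=\epsilon$ as well, but the one-sided statement suffices). I expect the only step needing real care to be this last one: recognising that the inverse of a bijective winding map is itself a winding map, which is exactly what the identity $\Delta\circ\sigma^{-1}=(\id\otimes\sigma^{-1})\circ\Delta$ provides; everything else is a routine manipulation of Sweedler notation and the counit axiom.
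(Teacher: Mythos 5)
Your proof is correct and follows essentially the same route as the paper: the identity $\tau^r_{\chi*\chi'}=\tau^r_\chi\circ\tau^r_{\chi'}$ together with $\tau^r_\epsilon=\id$ gives the submonoid claim and parts (1)--(2), and in (3) your left inverse is, as in the paper, $\epsilon$ composed with the inverse of $\tau^r_\chi$. The only cosmetic difference is that you route the verification of (3) through the injectivity of $\chi\mapsto\tau^r_\chi$ (incidentally obtaining a two-sided inverse), whereas the paper checks $\chi'*\chi=\epsilon$ by a one-line direct computation.
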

\begin{proof}
 Since $R$ is a coalgebra,  $\tau_\epsilon^r = id$, i.e. $\epsilon \in X_w^r(R)$.
	Let $\chi$ and $\chi'$ be any elements of $R^*$. For any $a\in R$ we have
	\begin{equation}\label{composition_char}\tau_{\chi*\chi'}^r (a) = a_1 \chi*\chi' (a_2) = a_1\chi(a_2)\chi'(a_3) = \tau_\chi^r(a_1\chi'(a_2))=\tau_\chi^r\left ( \tau_{\chi'}^r(a)\right).\end{equation}
	Hence if $\chi$ and $\chi'$ are in $X_w^r(R)$, then  $\tau_{\chi*\chi'}^r = \tau_\chi^r\circ \tau_{\chi'}^r$ is a unital algebra map.
$(1)$ Let $a\in R_t$. Then $\Delta(a)=1_1a\otimes 1_2$ (see \cite[36.6]{BrzezinskiWisbauer}).
Hence $\tau_\chi^r(a)=1_1a\chi(1_2)=\tau_\chi^r(1)a=a$, since  $\tau_\chi^r$ is unital. 

$(2)$ This follows from equation (\ref{composition_char}), because if $\chi'$ is a (two-sided) inverse of $\chi$ in $R^*$, then
$$\tau_\chi^r\circ\tau_{\chi'}^r = \tau_{\chi * \chi'}^r = \tau_\epsilon = id = \tau_\epsilon = \tau_{\chi' * \chi}^r  = \tau_{\chi'}^r\circ\tau_\chi^r.$$
Hence, $\tau_{\chi'}^r$ is the inverse function of the algebra map $\tau_\chi^r$ and therefore itself an algebra map, i.e. $\chi'\in X_w^r(R)$.

(3) If $\tau_\chi^r$ is an automorphism with inverse $\sigma$, then $\chi'=\epsilon\sigma$ satisfaz
$$\chi'*\chi(a) = \chi'(a_1)\chi(a_2) = \epsilon\sigma(\tau_\chi^r(a))= \epsilon(a),\qquad \forall a\in R.$$ Hence $\chi'$ is a left inverse of $\chi$ in $R^*$.
\end{proof}

	Analogously to the definition of a weak right character, we can define a weak left character as an element $\chi\in R^*$ such that $\tau_\chi^l$ is a unital algebra homomorphism. The set of such elements shall be denoted by $X_w^l(R)$.  It is clear that left and right characters are the same if $R$ is cocommutative.
With the same proof as in the last Lemma we have

\begin{lemma}\label{chi_rs_linear_part3} Let $R$ be a weak bialgebra.
Then $X_w^l(R)$ is a submonoid of the multiplicative monoid $R^*$. Moreover, for any $\chi \in X_w^r(R)$:
	\begin{enumerate}
\item $\tau_\chi^l(a)=a$ for any $a\in R_s$ and $\chi\in X_w^l(R)$.
\item If $\chi$ is invertible in $R^*$ with inverse $\chi'$, then $\tau_\chi^l$ is an automorphism of $R$ and $\chi'\in X_w^l(R).$
\item If $\tau_\chi^l$ is an automorphism of $R$, then $\chi$ has a right inverse $\chi'$ in $R^*$.
	\end{enumerate}

\end{lemma}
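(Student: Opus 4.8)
The plan is to mirror, essentially verbatim, the proof of Lemma~\ref{chi_rs_linear_part2}, systematically interchanging the roles of ``left'' and ``right'': right winding maps $\tau_\chi^r$ are replaced by left winding maps $\tau_\chi^l$, and the subalgebra $R_t$ is replaced by $R_s$. There is exactly one structural difference to keep in mind, and it is the source of the $l\leftrightarrow r$ swap in item~(3): the convolution in $R^*$ is now realised by composition in the \emph{opposite} order. First I would note that the counit axiom $\epsilon(a_1)a_2=a$ says precisely $\tau_\epsilon^l=\id$, so $\epsilon\in X_w^l(R)$. Then, for arbitrary $\chi,\chi'\in R^*$ and $a\in R$, coassociativity gives
$$\tau_{\chi*\chi'}^l(a)=(\chi*\chi')(a_1)\,a_2=\chi(a_1)\chi'(a_2)\,a_3=\tau_{\chi'}^l\bigl(\chi(a_1)a_2\bigr)=\tau_{\chi'}^l\bigl(\tau_\chi^l(a)\bigr),$$
that is, $\tau_{\chi*\chi'}^l=\tau_{\chi'}^l\circ\tau_\chi^l$. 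In particular, if $\chi,\chi'\in X_w^l(R)$ then $\tau_{\chi*\chi'}^l$ is a composite of unital algebra maps and hence a unital algebra map, so $\chi*\chi'\in X_w^l(R)$; together with $\epsilon\in X_w^l(R)$ this proves that $X_w^l(R)$ is a submonoid of $(R^*,*)$.

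For item~(1), if $a\in R_s$ then $\Delta(a)=1_1\otimes a1_2$, hence $\tau_\chi^l(a)=\chi(1_1)\,a1_2=a\,\chi(1_1)1_2=a\,\tau_\chi^l(1)=a$, where the middle equality just moves the scalar $\chi(1_1)$ past $a$ and the last uses that $\tau_\chi^l$ is unital. For item~(2), assume $\chi'$ is a two-sided inverse of $\chi$ in $R^*$; the composition formula gives $\tau_{\chi'}^l\circ\tau_\chi^l=\tau_{\chi*\chi'}^l=\tau_\epsilon^l=\id$ and likewise $\tau_\chi^l\circ\tau_{\chi'}^l=\tau_{\chi'*\chi}^l=\id$, so $\tau_{\chi'}^l$ is the inverse function of the unital algebra endomorphism $\tau_\chi^l$; a bijective unital algebra endomorphism is an automorphism whose inverse is again a unital algebra map, so $\tau_\chi^l$ is an automorphism and $\chi'\in X_w^l(R)$. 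For item~(3), if $\tau_\chi^l$ is an automorphism with inverse $\sigma$, set $\chi'=\epsilon\circ\sigma$; then for every $a\in R$,
$$(\chi*\chi')(a)=\chi(a_1)\chi'(a_2)=\chi'\bigl(\chi(a_1)a_2\bigr)=\epsilon\bigl(\sigma(\tau_\chi^l(a))\bigr)=\epsilon(a),$$
so $\chi*\chi'=\epsilon$ and $\chi'$ is a \emph{right} inverse of $\chi$ in $R^*$ --- the side being opposite to the right-handed case precisely because of the reversed composition rule.

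I do not expect a genuine obstacle here: the argument is a formal transcription of the proof of Lemma~\ref{chi_rs_linear_part2}. The only thing demanding care is the bookkeeping forced by the order reversal $\tau_{\chi*\chi'}^l=\tau_{\chi'}^l\circ\tau_\chi^l$ (as opposed to $\tau_{\chi*\chi'}^r=\tau_\chi^r\circ\tau_{\chi'}^r$): one must consistently swap ``left'' and ``right'' in the hypotheses and conclusions, most visibly turning $R_t$ into $R_s$ in~(1) and ``left inverse'' into ``right inverse'' in~(3). (One should also read the ``$\chi\in X_w^r(R)$'' appearing in the itemised statement as ``$\chi\in X_w^l(R)$''.)
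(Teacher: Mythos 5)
Your proposal is correct and is exactly what the paper intends: the paper gives no separate proof of this lemma, stating only that it follows ``with the same proof as in the last Lemma,'' and your left/right transcription (including the reversed composition rule $\tau_{\chi*\chi'}^l=\tau_{\chi'}^l\circ\tau_\chi^l$ and the resulting \emph{right} inverse in item~(3)) is the intended argument. You are also right that the ``$\chi\in X_w^r(R)$'' in the statement is a typo for $X_w^l(R)$.
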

	
	Furthermore, we set $X_w(R) = X_w^l(R)\cap X_w^r(R)$, whose elements are called {\it weak characters} and $X(R)=U(R^*)\cap X_w(R)$, whose elements are called {\it characters}.

\begin{example} \label{characters_matrix} Let $R=M_n(k)$ be the weak Hopf algebra from Example \ref{example_matrix} and take $\chi\in X_w(R)$. As $R$ is cocommutative, $\sigma:=\tau_\chi^l = \tau_\chi^r$ is a unital algebra homomorphism.
For each $i,j$ there are elements $\lambda_{ij}:=\chi(E_{ij})$. In particular, $\sigma(E_{ij})=  \lambda_{ij}E_{ij}$ and hence 
$\sum_i E_{ii} = 1 = \sigma(1) =  \sum_k \lambda_{ii} E_{ii}$, i.e. $\lambda_{ii}=1$ for all $i$. Moreover, 
$$\lambda_{il}\lambda_{lj} E_{ij}=  \sigma(E_{il})\sigma(E_{lj}) = \sigma(E_{il}E_{lj}) = \sigma(E_{ij}) = \lambda_{ij}E_{ij}.$$
Thus $\lambda_{ij} = \lambda_{il}\lambda_{lj}$ for all $i,j,l$. The case $j=i$ shows that $1 = \lambda_{ii} = \lambda_{il}\lambda_{li}$, i.e. $\lambda_{il}=\lambda_{li}^{-1} $, for any $i,j$. Thus  $\lambda_{ij} = \lambda_{il}\lambda_{lj} = \lambda_{li}^{-1}\lambda_{lj}$, which holds for any $l$. Fixing $l=1$ one can show that $\lambda_{12}, \ldots, \lambda_{1n}$ uniquely determine all such elements $\lambda_{ij}$ and hence the algebra map $\sigma$. Conversely, it is clear that  for any  $q_1=1, q_2,\ldots, q_n \in \mathcal{U}(k)=:k^\times$ one can define a weak character $\chi$ by setting $\chi(E_{ij}) := q_i^{-1}q_j$. Note that any such weak character is actually invertible. Hence we have shown that $X_w(R)=X(R)\simeq (k^\times)^{n-1}$ as groups. Furthermore, if $n>1$, then the characters $\chi$ defined like this are not multiplicative, since
$\chi(E_{11}E_{22})=0$, while $\chi(E_{11})\chi(E_{22})=1$.
\end{example}

The following Proposition is important for Panov's theorem.
\begin{proposition}\label{charcterisation_char} Let $R$ be a weak bialgebra and $\sigma:R\rightarrow R$ a unital algebra homomorphism.
\begin{enumerate}
\item  $\sigma = \tau_\chi^r$ for some $\chi \in X_w^r(R)$  if and only if $\Delta\sigma = (\emph{id} \otimes \sigma)\Delta$.
\item  $\sigma = \tau_\chi^l$ for some $\chi \in X_w^l(R)$  if and only if $\Delta\sigma = (\sigma \otimes \emph{id})\Delta$.
\end{enumerate}
In both cases $\chi=\epsilon\sigma$.
\end{proposition}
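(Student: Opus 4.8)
The plan is to prove the two statements in parallel, as they are mirror images of one another, and in each case to show that the functional $\chi$ is forced to equal $\epsilon\sigma$. I will carry out part (1) in detail; part (2) is obtained verbatim by replacing $\tau_\chi^r$, $(\mathrm{id}\otimes\sigma)\Delta$ and the map $\mathrm{id}\otimes\epsilon$ by $\tau_\chi^l$, $(\sigma\otimes\mathrm{id})\Delta$ and $\epsilon\otimes\mathrm{id}$.

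For the forward implication of (1), assume $\sigma=\tau_\chi^r$ for some $\chi\in X_w^r(R)$, so that $\sigma(a)=a_1\chi(a_2)$ for all $a\in R$. Using coassociativity one computes $\Delta(\sigma(a))=\Delta(a_1)\chi(a_2)=a_1\otimes a_2\chi(a_3)$, while $(\mathrm{id}\otimes\sigma)\Delta(a)=a_1\otimes\sigma(a_2)=a_1\otimes a_2\chi(a_3)$; hence $\Delta\sigma=(\mathrm{id}\otimes\sigma)\Delta$. Applying $\epsilon$ to the first tensorand and using the counit axiom gives $\epsilon(\sigma(a))=\epsilon(a_1)\chi(a_2)=\chi(a)$, so $\chi=\epsilon\sigma$, as claimed.

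For the converse, assume $\Delta\sigma=(\mathrm{id}\otimes\sigma)\Delta$ and put $\chi:=\epsilon\sigma\in R^*$. Apply $\mathrm{id}\otimes\epsilon$ to both sides of this identity evaluated at $a\in R$: the left-hand side becomes $(\mathrm{id}\otimes\epsilon)\Delta(\sigma(a))=\sigma(a)$ by the counit axiom, and the right-hand side becomes $a_1\epsilon(\sigma(a_2))=a_1\chi(a_2)=\tau_\chi^r(a)$. Therefore $\sigma=\tau_\chi^r$. Since $\sigma$ is by hypothesis a unital algebra homomorphism and $\tau_\chi^r=\sigma$, the functional $\chi$ belongs to $X_w^r(R)$ by the very definition of that set. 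This proves (1).

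There is no genuine obstacle beyond bookkeeping with Sweedler notation; the only point worth stating explicitly is that once we know $\tau_\chi^r=\sigma$, membership $\chi\in X_w^r(R)$ is immediate, because $X_w^r(R)$ is defined precisely as the set of functionals whose right winding map is a unital algebra homomorphism. Part (2) runs identically, using $\Delta(\tau_\chi^l(a))=\chi(a_1)\Delta(a_2)$ for the forward direction and applying $\epsilon\otimes\mathrm{id}$ in the converse, with $\chi=\epsilon\sigma$ again.
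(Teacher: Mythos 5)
Your proof is correct and follows essentially the same route as the paper: compute $\Delta\sigma(a)=a_1\otimes a_2\chi(a_3)=(\mathrm{id}\otimes\sigma)\Delta(a)$ for the forward direction, and apply $\mathrm{id}\otimes\epsilon$ with $\chi:=\epsilon\sigma$ for the converse. The only (welcome) additions are the explicit check that $\chi=\epsilon\sigma$ in the forward direction and the remark that $\chi\in X_w^r(R)$ follows directly from the definition once $\tau_\chi^r=\sigma$ is known to be a unital algebra map.
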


\begin{proof}
(1) Let $\sigma=\tau_\chi^r$ for some $\chi\in X_w^r(R)$. Then for any $a\in R$:
$$\Delta\sigma(a)=a_1\otimes a_2\chi(a_3) = a_1\otimes \tau_\chi^r(a_2) = (\id\otimes \sigma)\Delta(a).$$
On the other hand if $\sigma$ satisfies $\Delta\sigma=(\id\otimes \sigma)\Delta$, then applying $\id\otimes \epsilon$ and setting $\chi=\epsilon\sigma \in R^*$ yields
for all $a\in R$: $$\sigma(a) = (\id\otimes \epsilon)\Delta(\sigma(a)) = (\id\otimes \chi)\Delta(a) = a_1\chi(a_2)=\tau_\chi^r(a).$$

The proof of (2) is similar to (1).
\end{proof}

\begin{corollary}\label{chi_rs_linear} Let $R$ be a weak Hopf algebra with antipode $S$, then for any $\chi \in X_w(R)$:
\begin{enumerate}
\item[(i)] $S*\tau_\chi^r = \epsilon_s\tau_\chi^r$ and $\tau_\chi^l*S = \epsilon_t\tau_\chi^l$;
\item[(ii)] If $\chi S$ is the inverse of $\chi$ in $R^*$ then $S=\tau_\chi^l S \tau_\chi^r = \tau_\chi^r S \tau_\chi ^l.$
\end{enumerate}
\end{corollary}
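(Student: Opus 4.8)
The plan is to derive both statements from the two intertwining identities for winding automorphisms established in Proposition \ref{charcterisation_char}, together with two standard facts about a weak Hopf algebra $R$: first, that $\epsilon_s=\mu(S\otimes\id)\Delta$ and $\epsilon_t=\mu(\id\otimes S)\Delta$ are precisely the convolution products $S*\id$ and $\id*S$ in the algebra $\End{k}{R}$; and second, that the antipode is anti-comultiplicative, i.e. $\Delta(S(a))=S(a_2)\otimes S(a_1)$ for all $a\in R$ (see \cite{Boehm,BrzezinskiWisbauer}).

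For part (i), since $\chi\in X_w^r(R)$, Proposition \ref{charcterisation_char} gives $\Delta\tau_\chi^r=(\id\otimes\tau_\chi^r)\Delta$, hence $\Delta(\tau_\chi^r(a))=a_1\otimes\tau_\chi^r(a_2)$. I would first observe that this forces $f*\tau_\chi^r=(f*\id)\circ\tau_\chi^r$ for every $f\in\End{k}{R}$, since both sides send $a$ to $f(a_1)\tau_\chi^r(a_2)$. Taking $f=S$ and using $S*\id=\epsilon_s$ yields $S*\tau_\chi^r=\epsilon_s\tau_\chi^r$. Symmetrically, from $\Delta\tau_\chi^l=(\tau_\chi^l\otimes\id)\Delta$ one gets $\tau_\chi^l*g=(\id*g)\circ\tau_\chi^l$ for every $g\in\End{k}{R}$, and $g=S$ with $\id*S=\epsilon_t$ gives $\tau_\chi^l*S=\epsilon_t\tau_\chi^l$. (Note that only $\chi\in X_w^r(R)$, resp. $\chi\in X_w^l(R)$, is needed here.)

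For part (ii), I would evaluate $\tau_\chi^l S\tau_\chi^r$ on a generic element $a$, peeling off one winding map at a time: $\tau_\chi^r(a)=a_1\chi(a_2)$, then apply $S$, and before applying $\tau_\chi^l$ expand $\Delta(S(a_1))=S(a_2)\otimes S(a_1)$; after re-indexing with coassociativity the result collapses to $S(a_1)\,\bigl[(\chi S)*\chi\bigr](a_2)$. Since $\chi S$ is assumed to be the convolution inverse of $\chi$, $(\chi S)*\chi=\epsilon$, so this equals $S(a_1)\epsilon(a_2)=S(a)$ by counity. The same computation applied to $\tau_\chi^r S\tau_\chi^l$ produces $\bigl[\chi*(\chi S)\bigr](a_1)\,S(a_2)=\epsilon(a_1)S(a_2)=S(a)$; thus the first equality uses that $\chi S$ is a left convolution inverse and the second that it is a right convolution inverse, both of which are part of the hypothesis.

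The only delicate point is the Sweedler bookkeeping in part (ii): one must simultaneously track the order reversal coming from $\Delta(S(b))=S(b_2)\otimes S(b_1)$ and the re-indexing via coassociativity, which is easy to slip up on but presents no genuine difficulty. I do not expect any real obstacle; all the substance sits in Proposition \ref{charcterisation_char} and the anti-coalgebra property of $S$.
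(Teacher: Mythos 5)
Your proposal is correct and follows essentially the same route as the paper: part (i) is the paper's one-line computation $S*\tau_\chi^r(a)=S(a_1)a_2\chi(a_3)=\epsilon_s(\tau_\chi^r(a))$ recast as the operator identity $f*\tau_\chi^r=(f*\id)\circ\tau_\chi^r$ via Proposition \ref{charcterisation_char}, and part (ii) is exactly the paper's Sweedler computation collapsing to $S(a_1)\,(\chi S*\chi)(a_2)$ resp. $(\chi*\chi S)(a_1)\,S(a_2)$. Your explicit appeal to the anti-comultiplicativity of $S$ is a point the paper leaves implicit, and your bookkeeping is right (the paper's displayed formula for $\tau_\chi^r S\tau_\chi^l$ even contains a small index typo that your version corrects).
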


\begin{proof}
$(i)$ For any $a\in R$:
$S*\tau_\chi^r(a) = S(a_1)a_2\chi(a_3) = \epsilon_s(a_1\chi(a_2))=\epsilon_s(\tau_\chi^r(a))$. Similarly 
$\tau_\chi^l(a)*S = \chi(a_1)a_2S(a_3) = \epsilon_t(\chi(a_1)a_2)=\epsilon_t(\tau_\chi^l(a))$. 

$(ii)$  Suppose that $\chi S$ is the inverse of $\chi$. We calculate $$\tau_\chi^l(S(\tau_\chi^r(a)))=S(a_1)\chi(S (a_2)) \chi(a_3) = S(a_1) (\chi S*\chi)(a_2) = S(a),$$ for any $a\in R$. Similarly 
$\tau_\chi^r(S(\tau_\chi^l(a)))=\chi(a_1)\chi(S (a_1)) S(a_3) = (\chi *\chi S)(a_1)S(a_2) = S(a)$.

\end{proof}

%As any weak group-like element is invertible in a Hopf algebra, also any weak character is invertible. Given a character $\chi$ of a Hopf algebra $R$ with antipode $S$, the inverse of $\chi$ is the composition $\chi S$. 

\subsection{Tensor Products}\label{tensorproducts} The tensor product of two weak bialgebras (resp. weak Hopf algebras) is again a weak bialgebra (resp. Hopf algebra). More precisely if $A$ and $B$ are weak bialgebras with coalgebra structures $(A,\Delta_A, \epsilon_A)$ and $(B,\Delta_B, \epsilon_B)$, then the algebra $A\otimes B$ has the coalgebra structure $\Delta(a\otimes b) = \mathrm{flip} (\Delta_A\otimes \Delta_B)(a\otimes b) = (a_1\otimes b_1) \otimes (a_2\otimes b_2)$ and counit $\epsilon(a\otimes b) =\epsilon_A(a)\epsilon_B(b)$. If $A$ and $B$ have antipodes $S_A$ and $S_B$, respectively,  then $S(a\otimes b)=S_A(a)\otimes S_B(b)$ defines an antipode on $A\otimes B$.
In particular $G_w(A\otimes B) = G_w(A)\otimes G_w(B)$ and $X_w(A\otimes B)=X_w(A)\otimes X_w(B)\subseteq A^*\otimes B^*$, where for $\chi \in X_w(A)$ and $\chi' \in X_w(B)$ one naturally means by $\chi''=\chi \otimes \chi'$ the map that sends $a\otimes b$ to $\chi(a)\chi'(b)$. 

Note that for any Hopf algebra $H$, the matrix ring $M_n(H)=M_n(k)\otimes H$ is a weak Hopf algebra that is a Hopf algebra if and only if $n=1$.

Going back to the example of a groupoid algebra $R=k\cG$,  Example \ref{example_matrix}, we note that any groupoid can be decomposed in its connected component, i.e. the partition of the vertices $\cG_0$ such that between two different parts there does not exist any morphism. Then $R$ decomposes into a direct product of groupoid algebras of its connected components. Furthermore if $\cG$ is connected and $e_1$ is any vertex in $\cG_0$, then $G=\mathrm{Aut}(e_1)$, the set of automorphisms of $e_1$ is a group. Suppose that $\cG_0=\{e_1,\ldots, e_n\}$ is the set of vertices. For any $i>1$ fix a morphism $\alpha_{i}$ from $e_1$ to $e_i$. Then any morphism $\beta$ from $e_i$ to $e_j$ is of the form $\beta = \alpha_i^{-1}g \alpha_j $, where  $g=\alpha_i\beta\alpha_j^{-1}\in G$ (where we write the composition of maps from left to right, as in the case of going along the arrows in a diagram). Hence the map $\beta \mapsto gE_{ij}$ defines an algebra automorphism from the groupoid algebra $k\cG$ to the matrix ring $M_n(kG)$ over the group algebra $kG$. Hence we will identify $R$ with $M_n(kG)$ and as such $R=M_n(k)\otimes kG$ is a tensor product of the Hopf algebra $kG$ and the weak Hopf algebra $M_n(k)$. In particular given any group character $\rho:G\rightarrow k^\times$ and elements $q_1=1, q_2,\ldots, q_n \in k^\times$ (see Example \ref{characters_matrix}) we have a character $\chi$ of $R$ defined by $\chi(gE_{ij}) = q_i^{-1}q_j \rho(g)$.

\section{Skew-primitive elements in and skew-coderivations of weak bialgebras}

\subsection{Skew-primitive elements} A weak Hopf algebra that contains a primitive element in the usual sense must be a Hopf algebra, as mentioned in the introduction. We propose the following definition.

\begin{definition}
Let $H$ be a weak bialgebra and $g,h \in H$ two weak group-like elements.
An element $x\in H$ is called $(g,h)$-primitive if $\Delta(x)=\Delta(1)(g\otimes x + x\otimes h)$ and $\Delta(x)=(g\otimes x + x\otimes h)\Delta(1)$. An element is called skew-primitive if it is  $(g,h)$-primitive for some weak group-like elements $g,h$.
\end{definition}

\begin{remark} Let $R\subseteq H$ be an extension of weak Hopf algebras, $x\in H$ and $g,h \in R$ such that 
$\Delta(1)(g\otimes x + x\otimes h) = \Delta(x) = (g\otimes x + x\otimes h)\Delta(1).$ Suppose $R\cap Rx=0=R\cap xR$. The coassociativity of $\Delta$ applied on the first equality shows that
$$\Delta^2(1)(g\otimes g\otimes x + g\otimes x\otimes h + x\otimes \Delta(h))=(\Delta\otimes \id)\Delta(x)=\Delta^2(1)(\Delta(g)\otimes x + g\otimes x\otimes h + x\otimes h \otimes h).$$
Hence comparing the coefficient of $1\otimes 1 \otimes x$ in both expressions leads to $\Delta^2(1)(g\otimes g \otimes 1)=\Delta^2(1)(\Delta(g)\otimes 1)$. Applying $\id\otimes \epsilon$ yields $\Delta(1)(g\otimes g)=\Delta(g)$. Analogously,  using the second equality, one concludes $\Delta(g)=(g\otimes g)\Delta(1)$ which shows that $g$ needs to be a weak group-like element. The same is true for $h$, i.e. $x$ is $(g,h)$-primitive.
\end{remark}

\begin{lemma}\label{lemma_epsilon_t_x} Let $R$ be a weak bialgebra and $x$ a  $(g,h)$-primitive element.
Then
$$x = \epsilon_t(g)x + \epsilon_t(x)h  = g\epsilon_s(x) + x\epsilon_s(h).$$
\end{lemma}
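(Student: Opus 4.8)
The plan is to apply the two counit axioms $(\epsilon\otimes\id)\circ\Delta=\id=(\id\otimes\epsilon)\circ\Delta$ to the two defining relations of a $(g,h)$-primitive element, after expanding $\Delta(1)$ in Sweedler notation as $1_1\otimes 1_2$. The only thing that needs care is pairing each defining relation with the correct leg of the counit.

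First I would use $\Delta(x)=\Delta(1)(g\otimes x+x\otimes h)=1_1g\otimes 1_2x+1_1x\otimes 1_2h$ and apply $\epsilon\otimes\id$ to both sides. The left-hand side gives $x$ by the counit axiom, and the right-hand side becomes $\epsilon(1_1g)1_2x+\epsilon(1_1x)1_2h$. Recognising that $\epsilon(1_1g)1_2=\epsilon_t(g)$ and $\epsilon(1_1x)1_2=\epsilon_t(x)$ by the definition of $\epsilon_t$, and pulling these scalars to the left of the remaining factors $x$ and $h$, this expression is exactly $\epsilon_t(g)x+\epsilon_t(x)h$, which yields the first equality.

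For the second equality I would instead use $\Delta(x)=(g\otimes x+x\otimes h)\Delta(1)=g1_1\otimes x1_2+x1_1\otimes h1_2$ and apply $\id\otimes\epsilon$. The left-hand side again gives $x$, while the right-hand side becomes $g1_1\epsilon(x1_2)+x1_1\epsilon(h1_2)$; since $\epsilon(x1_2)1_1=\epsilon_s(x)$ and $\epsilon(h1_2)1_1=\epsilon_s(h)$ by the definition of $\epsilon_s$, this equals $g\epsilon_s(x)+x\epsilon_s(h)$, as required.

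There is essentially no obstacle here: it is a direct computation from the definitions of $\epsilon_t$, $\epsilon_s$ and the counit axiom. In fact the hypothesis that $g$ and $h$ are weak group-like is not needed for this lemma — only that $x$ satisfies the two displayed comultiplication identities. The sole point requiring attention is to use the counit leg $\epsilon\otimes\id$ together with the left-multiplied relation (so that $\epsilon_t$ appears) and the counit leg $\id\otimes\epsilon$ together with the right-multiplied relation (so that $\epsilon_s$ appears); matching them the other way would not produce the stated identities.
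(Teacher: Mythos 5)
Your proof is correct and is exactly the argument the paper intends: its one-line proof ("follows from the counity condition and the comultiplication of $x$") is precisely your computation of applying $\epsilon\otimes\id$ to $\Delta(x)=\Delta(1)(g\otimes x+x\otimes h)$ and $\id\otimes\epsilon$ to $\Delta(x)=(g\otimes x+x\otimes h)\Delta(1)$. Your observations about the correct pairing of counit legs with the two relations, and about the weak group-like hypothesis being unused here, are both accurate.
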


\begin{proof}
This follows  from the counity condition and the comultiplication of $x$.
\end{proof}

\begin{remark}\label{remark_epsilon_t}
As a consequence from the last Lemma one concludes that $\epsilon_t(x)=0$ if 
 $\epsilon_t(g)=1$ and $h$ is left $R_t$-torsion free, while $\epsilon_s(x)=0$ if $\epsilon_s(h)=1$ and $g$ is right $R_s$-torsion free.
 
 Moreover if $R\subseteq H$ is an extension of weak bialgebras such that $x\in H$, $g,h\in R$ and  $R\cap Rx=R\cap xR=0$, then $x= \epsilon_t(g)x = x\epsilon_s(h)$ and $\epsilon_t(x)h=g\epsilon_s(x)=0.$
Thus if $x$ is right $R_t$-torsion free, $\epsilon_t(g)=1$, which would imply that $g$ has a right inverse in case $R$ is a weak Hopf algebra. Similarly if $x$ is left $R_s$-torsion free, then $\epsilon_s(h)=1$, which would imply that $h$ has a left inverse in case of $R$ being a weak Hopf algebra.
\end{remark}

\begin{example}
Let $H$ be a Hopf algebra and consider the weak Hopf algebra $R=M_n(H)$ for some $n>0$. The coalgebra structure of $R$ is given by 
(cf. Subsection \ref{tensorproducts})  
$$\Delta(hE_{ij})=\Delta(h)(E_{ij}\otimes E_{ij}), \qquad \epsilon(hE_{ij})=\epsilon(h), \qquad S(hE_{ij})=S(h)E_{ji}, \forall h\in H, \forall i,j$$

Let $x\in H$ be a primitive element. Then  $xE_{ij}$ is a $(E_{ij},E_{ij})$-primitive element of $R$ since
$\Delta(xE_{ij}) = E_{ij}\otimes xE_{ij} + xE_{ij}\otimes E_{ij}.$ 
Note that if $n>1$, then neither is $E_{ij}$ invertible, i.e. $\epsilon_t(E_{ij})\neq 1$, nor is $E_{ij}$ a left $R_t$-torsion-free element.  However $ \epsilon(xE_{ij})=0$.

\end{example}

\subsection{Coderivations}

A derivation $\delta$ that satisfies Panov's condition is called a coderivation. We briefly discuss basic properties of such maps. Given a coalgebra $C$, a {\it coderivation} on $C$ is a map $\delta: C\rightarrow C$ such that $\Delta\delta = (\id\otimes \delta + \delta\otimes \id)\Delta$ holds (see \cite[p.44]{Doi}). For any $\chi\in C^*$ the map $\delta:=(\id\otimes \chi - \chi\otimes \id)\Delta$ is a coderivation, since
\begin{eqnarray*}\Delta\delta(a)&=&\Delta(a_1\chi(a_2)-\chi(a_1)a_2)\\
&=&a_1\otimes a_2\chi(a_3) - \chi(a_1)a_2\otimes a_3\\
&=&a_1\otimes a_2\chi(a_3) - a_1 \otimes \chi(a_2)a_3 + a_1\chi(a_2)\otimes a_3 - \chi(a_1)a_2\otimes a_3\\
&=& a_1\otimes \delta(a_2) + \delta(a_1)\otimes a_2 = (\id\otimes \delta + \delta \otimes \id)\Delta(a).
\end{eqnarray*}
Any such coderivation is called {\it inner}. It follows from \cite[Theorem 3]{Doi} that any coderivation of a coseparable coalgebra is inner. Thus $\delta=0$ is the only coderivation of a coseparable, cocommutative coalgebra, e.g. $C=M_n(k)$.

For an element $a\in R$ of an algebra we denote by $\lambda_a:R\rightarrow R$ the right $R$-linear map given by left multiplication by the element $a$, i.e. $\lambda_a(x)=ax$.

\begin{definition}\label{coderivation_def} Let $R$ be a weak bialgebra and $g, h \in G_w(R)$. A map $\delta:R\rightarrow R$ is called $(g,h)$-coderivation if $\Delta\delta = (\lambda_g\otimes \delta + \delta\otimes \lambda_h)\Delta$. A skew-coderivation is a $(g,h)$-coderivation for some $g,h\in G_w(R)$.
\end{definition}

\begin{lemma} \label{coderivation_epsilon}
Let $\delta$ be a $(g,h)$-coderivation of a weak bialgebra $R$ such that $\epsilon_s(g)=\epsilon_s(h)=1$. Then $\epsilon\delta=0$.
\end{lemma}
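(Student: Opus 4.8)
The plan is to mimic the computation that showed inner coderivations have counit-compatible behaviour, but now exploiting the hypothesis $\epsilon_s(g)=\epsilon_s(h)=1$ together with Lemma~\ref{property_et1}. Writing $\Delta\delta(a) = g a_1\otimes \delta(a_2) + \delta(a_1)\otimes h a_2$, I would apply the counit to the \emph{first} tensor factor, i.e. apply $\epsilon\otimes\id$ to both sides. On the left this gives $(\epsilon\otimes\id)\Delta(\delta(a)) = \epsilon_s'(\delta(a))$ — or, more usefully, I would instead apply $\id\otimes\epsilon$ first to isolate $\delta$ on one side and then feed back in. Let me be precise about which contraction to use.

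The cleanest route: apply $\epsilon$ to the left tensorand of $\Delta\delta(a) = g a_1\otimes \delta(a_2) + \delta(a_1)\otimes h a_2$. Using $(\epsilon\otimes\id)\Delta = \id$ on the left side yields
$$\delta(a) = \epsilon(g a_1)\delta(a_2) + \epsilon(\delta(a_1)) h a_2 .$$
Now apply $\epsilon$ to this whole equation. The first summand becomes $\epsilon\bigl(\epsilon(ga_1)\delta(a_2)\bigr) = \epsilon(g a_1)\epsilon(\delta(a_2)) = \epsilon\bigl(g\,\epsilon(a_1)\delta(a_2)\bigr)$, and since $\epsilon(a_1)\delta(a_2)$ collapses using coassociativity/counit inside $\delta$... actually the neat observation is: apply $\epsilon$ and use that $\epsilon(ga_1)\delta(a_2)$ summed is, by linearity, $\epsilon$ applied to something, but better is to use Lemma~\ref{property_et1}(ii): since $\epsilon_s(g)=1$ we have $\epsilon(g b) = \epsilon(b)$ for all $b\in R$ (the $n=1$ case), and likewise $\epsilon(h b)=\epsilon(b)$. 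So applying $\epsilon$ to the displayed identity gives
$$\epsilon\delta(a) = \epsilon(ga_1)\,\epsilon\delta(a_2) + \epsilon\delta(a_1)\,\epsilon(ha_2) = \epsilon(a_1)\,\epsilon\delta(a_2) + \epsilon\delta(a_1)\,\epsilon(a_2) = \epsilon\delta(a) + \epsilon\delta(a),$$
where the last equality uses the counit axiom $\epsilon(a_1)\epsilon\delta(a_2) = \epsilon\delta(a_1\epsilon(a_2)$-type collapsing, i.e. $\epsilon(a_1)(\epsilon\delta)(a_2) = (\epsilon\delta)(a)$ because $\epsilon\delta \in R^*$ and $(\epsilon\delta\otimes\epsilon + \epsilon\otimes\epsilon\delta)$ contracted against $\Delta$... concretely, $\sum \epsilon(a_1)\epsilon\delta(a_2) = \epsilon\delta\bigl(\sum\epsilon(a_1)a_2\bigr) = \epsilon\delta(a)$. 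Hence $\epsilon\delta(a) = 2\,\epsilon\delta(a)$, forcing $\epsilon\delta(a)=0$ (in characteristic $\neq 2$; if the field has characteristic $2$ one needs a slightly different contraction — see below).

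The main obstacle I anticipate is the characteristic-$2$ issue, since the argument above produces $2\epsilon\delta = \epsilon\delta$. To avoid it, I would instead contract the coderivation identity against $\epsilon$ on \emph{one} side only and use that $\epsilon\delta$ lands in the base field without creating the factor $2$: apply $\id\otimes\epsilon$ to $\Delta\delta(a) = g a_1\otimes\delta(a_2) + \delta(a_1)\otimes h a_2$, obtaining $\delta(a) = g a_1\,\epsilon\delta(a_2) + \delta(a_1)\,\epsilon(h a_2) = g a_1\,\epsilon\delta(a_2) + \delta(a_1)\epsilon(a_2) = g a_1\,\epsilon\delta(a_2) + \delta(a)$, hence $\sum g a_1\,\epsilon\delta(a_2) = 0$ for all $a$; applying $\epsilon$ and using $\epsilon(g b)=\epsilon(b)$ gives $\sum \epsilon(a_1)\epsilon\delta(a_2) = 0$, i.e. $\epsilon\delta(a)=0$. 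This is characteristic-free and uses only Lemma~\ref{property_et1}(ii) (equivalently equation~\eqref{wcounit}) and the coalgebra axioms; I would write it up in that order: contract with $\id\otimes\epsilon$, cancel $\delta(a)$, apply $\epsilon$, invoke $\epsilon_s(g)=1$.
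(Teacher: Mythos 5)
Your final, characteristic-free argument is correct and is essentially the paper's own proof with the two tensor factors swapped: the paper contracts with $\epsilon\otimes\id$, uses $\epsilon_s(g)=1$ to recognise the first summand as $\delta(a)$ and cancel it, is left with $\epsilon(\delta(a_1))ha_2=0$, and then applies $\epsilon$ together with $\epsilon_s(h)=1$, whereas you contract with $\id\otimes\epsilon$ and run the mirror image of the same computation. Your first attempt (applying $\epsilon$ to both factors at once, yielding $\epsilon\delta=2\epsilon\delta$) should simply be discarded in favour of the second, as you yourself conclude.
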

\begin{proof} For $a\in R$ we have
$$\delta(a)=(\epsilon\otimes \id)\Delta\delta(a)
=(\epsilon\otimes \id)(ga_1\otimes \delta(a_2) + \delta(a_1)\otimes ha_2)
=\delta(a) + \epsilon(\delta(a_1))ha_2
$$
Hence $\epsilon(\delta(a_1))ha_2=0$. Applying $\epsilon$ again and using equation (\ref{wcounit}) leads to $0 = \epsilon(\delta(a_1))\epsilon(ha_2) = \epsilon(\delta(a_1))\epsilon(a_2)) = \epsilon(\delta(a))$.
\end{proof}

\section{The main results}

Before we present necessary and sufficient conditions to extend a weak Hopf algebra structure from an algebra $R$ to an Ore extension $H=R[x;\sigma,\delta]$ we would like to point out that it is possible to apply Panov's original characterisation for Hopf algebras to produce proper weak Hopf algebra structures on Ore extensions.

Let $A$ be any Hopf algebra with automorphism $\sigma$, $\sigma$-derivation $\delta$ and group-like element $g$. Suppose that the Hopf algebra structure of $A$ can be extended to the Ore extension $A[x;\sigma, \delta]$ such that $x$ is $(g,1)$-primitive, e.g. if this data satisfies Panov's criteria. Then given any weak Hopf algebra $R$, the algebra $R\otimes A$ has a natural structure of weak Hopf algebra with group like element $\overline{g}=1\otimes g$. Moreover,  $\overline{\sigma} = id_R \otimes \sigma$ is an automorphism of $R\otimes A$ and $\overline{\delta}=id_R\otimes \delta$ is an $\overline{\sigma}$-derivation of $R\otimes A$. Since $(R\otimes A)[y;\overline{\sigma},\overline{\delta}]$ is isomorphic to $R\otimes A[x;\sigma, \delta]$ with $y\mapsto 1\otimes x$, the weak Hopf algebra structure of $R\otimes A[x;\sigma,\delta]$ can be lifted to a weak Hopf algebra structure on $(R\otimes A)[y;\overline{\sigma},\overline{\delta}]$ which extends the weak Hopf algebra structure of $R\otimes A$ and would be an ordinary Hopf algebra if and only if $R$ was an ordinary Hopf algebra.

A particular example is the groupoid algebra $H=M_n(kG)$ for some group $G$, field $k$ and $n>0$. Since $H=R\otimes A$ with $R=M_n(k)$ a weak Hopf algebra and $A=kG$ an ordinary Hopf algebra, we have that if $kG[x;\sigma,\delta]$ has a Hopf algebra structure extending the one of $kG$, then $M_n(kG)[y,\overline{\sigma},\overline{\delta]}$ becomes a weak Hopf algebra extending the weak Hopf algebra structure of $M_n(kG)$.

\begin{proposition}[Necessary conditions]\label{nec_cond_0}
Let $R$ be a weak bialgebra with automorphism $\sigma$, 
$\sigma$-derivation $\delta$ and weak group-like element $g$, such that the weak bialgebra structure on $R$ extends to a weak bialgebra structure on $H=R[x;\sigma,\delta]$ with $x$ an $(g,1)$-primitive element. Then
\begin{enumerate}
\item[(i)]  $\epsilon_t(g)=1$;
\item[(ii)] $\delta$ is a $(g,1)$-coderivation; 
\item[(iii)] $\sigma = \tau_\chi^l$, for some weak left character $\chi\in X_w^l(R)$, which has a right inverse in $R^*$;
\item[(iv)] $\Delta(\sigma(a))(g\otimes 1) = (g\otimes 1)(\id\otimes \sigma)\Delta(a)$
holds for any $a\in R$.
\end{enumerate}
\end{proposition}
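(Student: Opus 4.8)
The plan is to read off each of the four conditions from the hypothesis that $\Delta$ and $\epsilon$ on $H=R[x;\sigma,\delta]$ are an algebra-compatible coalgebra structure extending that of $R$, together with $\Delta(x)=\Delta(1)(g\otimes x + x\otimes 1)$ and $\Delta(x)=(g\otimes x + x\otimes 1)\Delta(1)$, and $\epsilon(x)=0$ (forced by counitality, as noted in the introduction). The essential trick throughout is the same one used in the Remark before Lemma~\ref{lemma_epsilon_t_x}: since $\{1,x,x^2,\dots\}$ is a free left (and right) $R$-basis of $H$, one has $R\cap Rx = R\cap xR = 0$ and, more generally, the decomposition $H\otimes H = \bigoplus_{i,j\ge 0} (R\otimes R)(x^i\otimes x^j)$ lets us compare "coefficients'' of $1\otimes 1$, $1\otimes x$, $x\otimes 1$, etc., on both sides of any identity.

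First I would establish (ii) and (iv) simultaneously by computing $\Delta$ of the defining Ore relation $xa = \sigma(a)x + \delta(a)$ for $a\in R$. Since $\Delta$ is multiplicative, $\Delta(xa)=\Delta(x)\Delta(a) = (g\otimes x + x\otimes 1)\Delta(1)\Delta(a) = (g\otimes x + x\otimes 1)(a_1\otimes a_2)$ (using $\Delta(1)\Delta(a)=\Delta(a)$), which equals $ga_1\otimes xa_2 + xa_1\otimes a_2$. Expanding each $xa_i$ via the Ore relation gives $ga_1\otimes(\sigma(a_2)x+\delta(a_2)) + (\sigma(a_1)x+\delta(a_1))\otimes a_2$. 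On the other hand $\Delta(\sigma(a)x+\delta(a)) = \Delta(\sigma(a))\Delta(x) + \Delta(\delta(a))$, and since $\Delta(\sigma(a))\in R\otimes R$ we have $\Delta(\sigma(a))\Delta(x)=\Delta(\sigma(a))(g\otimes x + x\otimes 1)\Delta(1)=\Delta(\sigma(a))(g\otimes x + x\otimes 1)$ after noting $\Delta(\sigma(a))\Delta(1)=\Delta(\sigma(a))$. Writing $\Delta(\sigma(a)) = \sigma(a)_1\otimes\sigma(a)_2$, the right side becomes $\sigma(a)_1 g\otimes \sigma(a)_2 x + \sigma(a)_1\otimes\sigma(a)_2 x \cdot(\text{nope})$ — more carefully, $\Delta(\sigma(a))(g\otimes x) + \Delta(\sigma(a))(x\otimes 1) + \Delta(\delta(a))$, where the middle term lies in $(R\otimes R)(x\otimes 1)$. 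Now equate coefficients in the basis $\{x^i\otimes x^j\}$: the coefficient of "$(R\otimes R)(1\otimes x)$'' on the left is $ga_1\otimes\sigma(a_2)$ and on the right is $\Delta(\sigma(a))(g\otimes 1)$, giving (iv); the coefficient of "$(R\otimes R)(x\otimes 1)$'' gives $\sigma(a_1)\otimes a_2 = \Delta(\sigma(a))(1\otimes\cdots)$ — comparing after applying the Ore expansion carefully this reduces (together with (iv)) to the statement that $\delta$ is a $(g,1)$-coderivation, i.e. the $1\otimes 1$-coefficient yields $\Delta(\delta(a)) = ga_1\otimes\delta(a_2) + \delta(a_1)\otimes a_2$.

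Next, for (iii): apply $\epsilon\otimes\mathrm{id}$ (or $\mathrm{id}\otimes\epsilon$) to the coassociativity-type identity just obtained, or more directly use that counitality of $\epsilon$ on $H$ restricts to counitality on $R$ while $\epsilon(x)=0$. Applying $\epsilon\otimes\mathrm{id}$ to (iv) and using $\epsilon(g a_1)=\epsilon(a_1)$ — which needs $\epsilon_t(g)=1$, so (i) must be proved first — yields $\sigma(a) = (\epsilon\otimes\sigma)\Delta(a) = $ ... hmm, rather I would apply $\mathrm{id}\otimes\epsilon$: from $\Delta(\sigma(a))(g\otimes 1) = (g\otimes 1)(a_1\otimes\sigma(a_2))$, apply $\mathrm{id}\otimes\epsilon$ to get $\sigma(a)_1\epsilon(\sigma(a)_2)\cdot$ — this gives $\sigma(a) g = g a_1\epsilon(\sigma(a_2)) = g\,\tau_\chi^l(a)$ with $\chi=\epsilon\sigma$, so $\sigma = Ad_{g^{-1}}\tau_\chi^l$-flavoured; to get $\sigma=\tau_\chi^l$ outright I would instead use Proposition~\ref{charcterisation_char}(2), which says $\sigma=\tau_\chi^l$ iff $\Delta\sigma=(\sigma\otimes\mathrm{id})\Delta$. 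To see the latter, combine (iv) with the fact (to be derived from coassociativity of $\Delta(x)$, exactly as in the Remark) that conjugation by $g\otimes 1$ can be removed — this is where $\epsilon_t(g)=1$ and the weak-group-like relations for $g$ are used; then $\chi=\epsilon\sigma\in X_w^l(R)$ because $\tau_\chi^l=\sigma$ is a unital algebra map, and $\chi$ has a right inverse by Lemma~\ref{chi_rs_linear_part3}(3) since $\sigma$ is an automorphism.

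Finally, (i): I would get $\epsilon_t(g)=1$ from the $(g,1)$-primitivity of $x$ together with $x$ being right $R_t$-torsion free in $H$. By Lemma~\ref{lemma_epsilon_t_x}, $x = \epsilon_t(g)x + \epsilon_t(x)\cdot 1$; applying $\epsilon$ and using $\epsilon(x)=0$ gives $\epsilon_t(x)=\epsilon(\epsilon_t(x))=\epsilon(x)-\epsilon(\epsilon_t(g)x)$, and since $\epsilon(\epsilon_t(g)x) = \epsilon(\epsilon_t(g)\epsilon_t(x))$... more cleanly: from $x-\epsilon_t(g)x = \epsilon_t(x)\in R$ and $R\cap Rx = 0$ (freeness of the Ore basis) we get both $\epsilon_t(x)=0$ and $(1-\epsilon_t(g))x=0$; since $x$ is part of a free $R$-basis, left multiplication by $1-\epsilon_t(g)$ kills $x$ only if $1-\epsilon_t(g)=0$, i.e. $\epsilon_t(g)=1$. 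I expect the main obstacle to be (iii) — specifically, carefully disentangling the $g\otimes 1$ twists so as to land exactly on $\sigma=\tau_\chi^l$ rather than a conjugate of it, which forces one to use the weak-group-like identities for $g$ and the fact $\epsilon_t(g)=1$ in just the right combination; the computations for (ii), (iv) are bookkeeping with the Ore relation and the basis $\{x^i\otimes x^j\}$, and (i) is short once freeness is invoked.
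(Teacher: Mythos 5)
Your overall strategy --- expand $\Delta(x)\Delta(a)=\Delta(xa)=\Delta(\sigma(a))\Delta(x)+\Delta(\delta(a))$ and compare coefficients in the free basis $\{x^i\otimes x^j\}$ of $H\otimes H$ over $R\otimes R$, plus the freeness argument $R\cap Rx=0$ for part (i) --- is exactly the paper's, and your treatments of (i), (ii) and (iv) are sound. But your route to (iii) has a genuine gap. You propose to obtain $\Delta\sigma=(\sigma\otimes\id)\Delta$ (so as to invoke Proposition~\ref{charcterisation_char}(2)) by starting from (iv) and arguing that ``conjugation by $g\otimes 1$ can be removed.'' In this Proposition $g$ is only a \emph{weak} group-like element of a weak \emph{bialgebra}: it is not assumed invertible, there is no antipode, and even the conclusion $\epsilon_t(g)=1$ of part (i) does not make right multiplication by $g$ injective. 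So the identity $\Delta(\sigma(a))(g\otimes 1)=ga_1\otimes\sigma(a_2)$ simply does not determine $\Delta(\sigma(a))$, and no combination of $\epsilon_t(g)=1$ with the weak group-like relations will let you cancel the $g$. (Your parenthetical alternative of applying $\id\otimes\epsilon$ to (iv) also fails: since $\epsilon$ is not multiplicative, $(\id\otimes\epsilon)\bigl(\Delta(\sigma(a))(g\otimes 1)\bigr)=\sigma(a)_1 g\,\epsilon(\sigma(a)_2)$, which is not $\sigma(a)g$.)

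The fix is already sitting in your own computation, mislabeled. The coefficient of $x\otimes 1$ in the two expansions gives, on the nose, $\sigma(a_1)\otimes a_2=\Delta(\sigma(a))$, i.e.\ $\Delta\sigma=(\sigma\otimes\id)\Delta$ with no $g$ in sight; this is the paper's equation for (iii), and applying $\epsilon\otimes\id$ (or citing Proposition~\ref{charcterisation_char}(2)) yields $\sigma=\tau_\chi^l$ with $\chi=\epsilon\sigma$, after which Lemma~\ref{chi_rs_linear_part3}(3) gives the right inverse of $\chi$ since $\sigma$ is an automorphism. In your write-up you instead folded this $x\otimes 1$ comparison into the derivation of the coderivation property (ii); in fact the three coefficient comparisons ($1\otimes x$, $x\otimes 1$, $1\otimes 1$) yield (iv), the key identity for (iii), and (ii) respectively, independently of one another. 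Rerouted this way, your proof coincides with the paper's.
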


\begin{proof} 
 By Remark \ref{remark_epsilon_t}, $\epsilon_t(g)=1$. By equation (\ref{wcounit}) we have therefore $\epsilon(Hx)=0$.
For any $a\in R$ we have:
\begin{eqnarray*}\Delta(x)\Delta(a) &=& (g\otimes x + x\otimes 1)\Delta(a) \\
&=& (ga_1\otimes \sigma(a_2))(1\otimes x) +  ga_1\otimes \delta(a_2)  + (\sigma(a_1)\otimes a_2)(x\otimes 1) + \delta(a_1) \otimes a_2\\
&=& (ga_1\otimes \sigma(a_2))(1\otimes x)  + (\sigma(a_1)\otimes a_2)(x\otimes 1) +  ga_1\otimes \delta(a_2)  + \delta(a_1) \otimes a_2
\end{eqnarray*}
On the other hand using the multiplication rule in $H$:
$$\Delta(x)\Delta(a) = \Delta(\sigma(a))\Delta(x) + \Delta(\delta(a)) =  \Delta(\sigma(a))(g\otimes 1)(1\otimes x) +  \Delta(\sigma(a))(x\otimes 1) + \Delta(\delta(a)) $$
Hence comparing the coefficients of $1\otimes x$, $x\otimes 1$ and $1\otimes 1$ we conclude
\begin{eqnarray}\Delta(\sigma(a))(g\otimes 1) &=& ga_1\otimes \sigma(a_2) \label{eq.1} \\
\Delta(\sigma(a)) &=& \sigma(a_1)\otimes a_2 \label{eq.2}\\
\Delta(\delta(a)) &=& ga_1\otimes \delta(a_2)  + \delta(a_1) \otimes a_2.\label{eq.3}
\end{eqnarray}
The first condition is equation (iv), while the last condition shows that $\delta$ is a $(g,1)$-coderivation. Set $\chi=\epsilon\circ\sigma
\in R^{*}$. Applying $\epsilon\otimes \id$ to (\ref{eq.2}) yields $\sigma(a)=\chi(a_1)a_2 = \tau_\chi^l(a)$, i.e. $\sigma=\tau_\chi^l$. Since $\sigma$ is an automorphism, $\chi\in X_w^l(R)$ and $\chi$ has a right inverse, by Lemma \ref{chi_rs_linear_part3}.
\end{proof}

For an invertible element $g\in R$ the linear automorphism of $R$ that sends an element $a$ to $gag^{-1}$  is called the (left) adjoint map by $g$ and denoted by $Ad_g$.

\begin{theorem}[Extending the weak bialgebra structure]\label{nec_cond_1}
Let $R$ be a weak bialgebra with automorphism $\sigma$, 
$\sigma$-derivation $\delta$ and group-like element $g$ with 
$\epsilon(a\delta(b))=0$ for all $a,b\in R$.

Then the following statements are equivalent:
\begin{enumerate}
\item[(a)] The weak bialgebra structure on $R$ extends to a weak bialgebra structure on $H=R[x;\sigma,\delta]$ such that $x$ is a $(g,1)$-primitive element.
\item[(b)]  There exists character $\chi$ such that $\sigma = \tau_\chi^l = Ad_g \tau_\chi ^r$ and $\delta$ is a $(g,1)$-coderivation.
\end{enumerate}
\end{theorem}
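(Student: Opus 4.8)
\textbf{Proof strategy for Theorem~\ref{nec_cond_1}.}

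The plan is to prove the two implications separately, using Proposition~\ref{nec_cond_0} and Proposition~\ref{charcterisation_char} as the main tools. For the direction $(a)\Rightarrow(b)$, I would invoke Proposition~\ref{nec_cond_0} directly: it already yields that $\delta$ is a $(g,1)$-coderivation and that $\sigma=\tau_\chi^l$ for a weak left character $\chi=\epsilon\sigma$ which has a right inverse in $R^*$. Since $g$ is assumed to be a (genuine) group-like element, it is invertible, so $\epsilon_t(g)=\epsilon_s(g)=1$ and I can form $Ad_g$. The remaining work is to extract from condition~(iv) of Proposition~\ref{nec_cond_0}, namely $\Delta(\sigma(a))(g\otimes 1)=(g\otimes 1)(\id\otimes\sigma)\Delta(a)$, that $Ad_g^{-1}\sigma=\tau_{\chi'}^r$ for a weak right character $\chi'$, and then to identify $\chi'$ with $\chi$ so that $\chi\in X_w(R)$ is an actual character. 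Here I would rewrite (iv) as $\Delta\big(Ad_g^{-1}\sigma(a)\big)=(\id\otimes Ad_g^{-1}\sigma)\Delta(a)$ by multiplying on the left by $g^{-1}\otimes g^{-1}$ and using that $g$ is group-like, so that $(g^{-1}\otimes g^{-1})\Delta(\sigma(a))(g\otimes 1)$ collapses appropriately; then Proposition~\ref{charcterisation_char}(1) gives $Ad_g^{-1}\sigma=\tau_{\chi'}^r$ with $\chi'=\epsilon(Ad_g^{-1}\sigma)=\epsilon\sigma=\chi$ (using $\epsilon(g^{-1}ag)=\epsilon(a)$, which follows from Lemma~\ref{property_et1} since $\epsilon_t(g)=\epsilon_s(g^{-1})=1$). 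This simultaneously shows $\sigma=Ad_g\tau_\chi^r$ and, combined with $\sigma=\tau_\chi^l$ and the fact that $\chi$ has both a left inverse (from $\tau_\chi^r$ being an automorphism, Lemma~\ref{chi_rs_linear_part2}(3)) and a right inverse (Proposition~\ref{nec_cond_0}(iii)), that $\chi\in X(R)$.

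For the converse $(b)\Rightarrow(a)$, I would construct the coalgebra structure on $H=R[x;\sigma,\delta]$ explicitly and verify the weak bialgebra axioms. Since $H$ is free as a left $R$-module on $\{x^n\}_{n\geq 0}$, it suffices to define $\Delta$ and $\epsilon$ on $R$ and on $x$ and check that the defining relation $xa=\sigma(a)x+\delta(a)$ is respected. I would set $\epsilon|_R$ as given, $\epsilon(x)=0$ (forced, and consistent since $\epsilon(Hx)=0$ will follow from $\epsilon_t(g)=1$ via Lemma~\ref{property_et1}), $\Delta|_R$ as given, and $\Delta(x)=g\otimes x+x\otimes 1$. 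The key compatibility to check is that $\Delta$ extends to an algebra map, i.e. that $\Delta(x)\Delta(a)=\Delta(\sigma(a))\Delta(x)+\Delta(\delta(a))$ in $H\otimes H$: this is exactly the computation reversed from the proof of Proposition~\ref{nec_cond_0}, and it splits into the three identities \eqref{eq.1}, \eqref{eq.2}, \eqref{eq.3}. Identity \eqref{eq.2} is $\sigma=\tau_\chi^l$ combined with Proposition~\ref{charcterisation_char}(2); identity \eqref{eq.1} is the rewritten form of $\sigma=Ad_g\tau_\chi^r$ via Proposition~\ref{charcterisation_char}(1); and \eqref{eq.3} is precisely the $(g,1)$-coderivation hypothesis on $\delta$. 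One also needs coassociativity of $\Delta$ on $H$ (reduce to checking on $x$, using that $g$ is weak group-like so $\Delta(g)=\Delta(1)(g\otimes g)=(g\otimes g)\Delta(1)$), counitality on $H$ (reduce to $x$, using Lemma~\ref{property_et1}(i) with $\epsilon_t(g)=1$), and the two weak bialgebra axioms~(i) and~(ii) from Definition~\ref{def_weakbialgebra}. Axiom~(i) involves only $\Delta(1)$, which lies in $R\otimes R$, so it is inherited from $R$; axiom~(ii), weak multiplicativity of $\epsilon$, must be checked on monomials $fx^i\,gx^j\,hx^k$, but since $\epsilon$ vanishes on $Hx$ it reduces to the case $i=j=k=0$, which holds in $R$.

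The main obstacle I anticipate is the bookkeeping in verifying that $\Delta$ is well-defined as an algebra homomorphism on $H$, i.e. that it respects the Ore relation — one must confirm that the three identities \eqref{eq.1}--\eqref{eq.3} are not merely necessary but, taken together, \emph{sufficient} for $\Delta(xa-\sigma(a)x-\delta(a))=0$, and then propagate this to all monomials $x^na$ by induction on $n$ using that $\Delta$ is multiplicative on the pieces already known. A secondary subtlety is making sure that the hypothesis $\epsilon(a\delta(b))=0$ for all $a,b$ (rather than the a priori weaker conditions) is what is needed to force $\epsilon(x)=0$ to be consistent and to get $\epsilon\delta=0$; this is where Lemma~\ref{coderivation_epsilon} enters, noting that $\epsilon_s(g)=\epsilon_s(1)=1$ since $g$ is group-like. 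Everything else is routine once the coalgebra maps are shown to be well-defined algebra maps and the three structural identities are in hand.
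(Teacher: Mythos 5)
Your direction $(a)\Rightarrow(b)$ is sound and essentially the paper's argument: both extract $\sigma=Ad_g\tau_\chi^r$ from condition (iv) of Proposition \ref{nec_cond_0} (the paper clears the factor $g\otimes 1$ and applies $\id\otimes\epsilon$; you route through Proposition \ref{charcterisation_char}(1) and the identity $\epsilon(g^{-1}ag)=\epsilon(a)$ --- both work), and invertibility of $\chi$ comes from combining the right inverse supplied by Lemma \ref{chi_rs_linear_part3}(3) with the left inverse supplied by Lemma \ref{chi_rs_linear_part2}(3) once $\tau_\chi^r=Ad_{g^{-1}}\sigma$ is known to be an automorphism.

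The converse, however, has a genuine gap at its very first step: you set $\Delta(x)=g\otimes x+x\otimes 1$. In a proper weak bialgebra the comultiplication is non-unital, so multiplicativity forces $\Delta(x)=\Delta(1\cdot x)=\Delta(1)\Delta(x)$, and the element $g\otimes x+x\otimes 1$ does not satisfy this in general (for $R=M_n(k)$ with $n>1$ one has $\Delta(1)(g\otimes 1)=\sum_i E_{ii}g\otimes E_{ii}\neq g\otimes 1$). Moreover $(g,1)$-primitivity in this paper \emph{means} $\Delta(x)=\Delta(1)(g\otimes x+x\otimes 1)=(g\otimes x+x\otimes 1)\Delta(1)$. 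So you must, as the paper does, define $\Delta(x):=\Delta(1)(g\otimes x+x\otimes 1)$ and separately establish the commutation $(g\otimes x+x\otimes 1)\Delta(1)=\Delta(1)(g\otimes x+x\otimes 1)$, which the paper obtains as the $a=1$ case of its key identity $(\dagger)$; this is precisely the point where the weak case departs from Panov's original argument, and your outline skips it. A second, smaller gap: counitality cannot be ``reduced to $x$'', because $(\id\otimes\epsilon)\Delta$ and $(\epsilon\otimes\id)\Delta$ are not algebra maps; the paper verifies them on every monomial $ax^n$ by expanding $(g\otimes x+x\otimes 1)^n=\sum C^{(n)}_{i,j}x^i\otimes x^j$ and using that $C^{(n)}_{n,0}=1$, $C^{(n)}_{i,0}=0$ for $i<n$, $C^{(n)}_{0,n}=g^n$ and $C^{(n)}_{0,j}\in\mathrm{span}\{a\delta(b)\}$ for $j<n$. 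That last fact, combined with $\epsilon(ag^n)=\epsilon(a)$ from Lemma \ref{property_et1}, is where the hypothesis $\epsilon(a\delta(b))=0$ is actually consumed --- not merely in making $\epsilon(x)=0$ consistent, as your sketch suggests.
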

\begin{proof}
$(a)\Rightarrow (b)$ By Proposition \ref{nec_cond_0} $\delta$ is a $(g,1)$-coderivation and  there exists a weak left character $\chi=\epsilon\sigma$ such that $\sigma=\tau_\chi^l$. Moreover by the same Proposition, one has $\Delta(\sigma(a))(g\otimes 1) = (g\otimes 1)(\id\otimes \sigma)\Delta(a)$, for any $a\in R$.  Multiplying $g^{-1}\otimes 1$ from the left yields
$\Delta\sigma = (Ad_g\otimes \sigma)\Delta$ and applying $(id\otimes \epsilon)$ to both sides from the right brings us to $\sigma = Ad_g\tau_{\epsilon\sigma}^r$. Since
 $\epsilon\sigma = \epsilon\tau_\chi^l = \chi$, we have $\sigma=Ad_g\tau_\chi^r$ or equivalently $\tau_\chi^r=Ad_{g^{-1}}\sigma$, which shows that $\tau_\chi^r$ is also an automorphism and hence $\chi\in X_w^r(R)$ is also a weak right character and $\chi$ has a right inverse, i.e. $\chi$ is a character.

$(b)\Rightarrow (a)$: 
Suppose $\sigma=Ad_g\tau_\chi^r$, then for any $a\in R$:
$$\Delta(\sigma(a))(g\otimes 1) = \Delta(ga_1\chi(a_2)g^{-1})(g\otimes 1)
= (ga_1g^{-1})\otimes (ga_2g^{-1})(g\otimes 1)\chi(a_3) = (g\otimes 1) (a_1\otimes \sigma(a_2)).$$
Assuming $\epsilon(a\delta(b))=0$ for all $a,b\in R$, we also calculate for all $a\in R$:.
\begin{eqnarray*}
(\dagger)\qquad (g\otimes x + x\otimes 1)\Delta(1)\Delta(a)&=&
ga_1\otimes \sigma(a_2)x + ga_1\otimes \delta(a_2) + \sigma(a_1)x\otimes a_2 + \delta(a_1)\otimes a_2\\
&=& \Delta(\sigma(a))(g\otimes x) +  \Delta(\sigma(a))(x\otimes 1)+ \Delta(\delta(a))
\end{eqnarray*}
where we also used that $\delta$ is a $(g,1)$-coderivation, together with  Proposition \ref{charcterisation_char}.
In particular for $a=1$ we obtain $(g\otimes x + x\otimes 1)\Delta(1)=\Delta(1)(g\otimes x + x\otimes 1).$ Moreover, by the universal property of the Ore extension we obtain  a (non-unital) algebra homomorphism $\Delta:H\rightarrow H\otimes H$ that extends the comultiplication of $R$ and satisfies  $\Delta(x)=\Delta(1)(g\otimes x + x\otimes 1)$ since the equation $(\dagger)$ shows that $\Delta(x)\Delta(a)=\Delta(\sigma(a))\Delta(x)+\Delta(\delta(a))$ holds for all $a\in R$.
In order to prove the coassociativity of $\Delta$, we show  $(\Delta\otimes \id)\Delta(x)=(\id\otimes \Delta)\Delta(x)$ since then by the uniqueness part of the universal property of the Ore extension both maps $(\Delta\otimes \id)\Delta$ and $(\id\otimes \Delta)\Delta$ are equal. As a shorthand we write  $\Delta^2(a)=(\Delta\otimes \id)\Delta(a)$ for any $a\in R$. Note that 
$$\Delta^2(a)(\Delta(1)\otimes 1) = \Delta^2(a)(1\otimes \Delta(1)) =\Delta^2(a).$$
Hence we calculate:
\begin{eqnarray*}
(\Delta\otimes \id)\Delta(x)
&=&\Delta^2(1)(\Delta(g)\otimes x + \Delta(x)\otimes 1)\\
&=&\Delta^2(1)(g\otimes g\otimes x + g\otimes x\otimes 1 + x\otimes 1\otimes 1)\\
&=&\Delta^2(1)(g\otimes \Delta(x)\otimes x + x\otimes \Delta(1)) \\
&=&(\id\otimes \Delta)\Delta(x).
\end{eqnarray*}
This shows the coassociativity of $\Delta$.

For the counity, we define a linear map $\epsilon:H\rightarrow k$ by $\epsilon\left(\sum_{i=0}^n a_ix^i\right) = \epsilon(a_0)$ for any $a_i\in R$.
The map $\epsilon$ is well defined since the powers of $x$ form a basis of $H$ over $R$. Note that for all $a,b\in R$:
$\epsilon(axb)=\epsilon(a\sigma(b)x)+\epsilon(a\delta(b))=0$, where we use our additional assumption that $\epsilon(a\delta(b))=0$ holds for any $a,b\in R$.
By induction it is easy to prove $\epsilon(HxH)=0$.

It is enough to verify the property of the counit for elements of the form $ax^n$ with $n>0$.
A short induction argument shows that for any $n>0$, there exist elements $C^{(n)}_{i,j} \in R$ such that 
\begin{equation}\label{eq_expand_power}
(g\otimes x + x\otimes 1)^n = \sum_{i,j=0}^n C^{(n)}_{i,j} x^i\otimes x^j,
\end{equation}
where 
 $C^{(n)}_{n,0}=1$ and $C^{(n)}_{i,0}=0$ for any $i<n$, as well as 
$ C^{(n)}_{0,n}=g^n$ and $C^{(n)}_{0,j} \in \mathrm{span}\{a\delta(b):a,b\in R\}$ for any $j<n$.
Then
$$(\id\otimes \epsilon)\Delta(ax^n) =\sum_{i,j=0}^n a_1C^{(n)}_{i,j} x^i \epsilon(a_2x^j) 
=\sum_{i=0}^n aC^{(n)}_{i,0} x^i  = ax^n,$$
since $C^{(n)}_{i,0}=0$ for $i<n$ and $C^{(n)}_{n,0}=1$. Moreover, 

$$(\epsilon\otimes \id)\Delta(ax^n) =\sum_{i,j=0}^n \epsilon(a_1C^{(n)}_{i,j} x^i)a_2x^j = 
\underbrace{\epsilon(a_1g^n)}_{=\epsilon(a_1)}a_2x^n + \sum_{j=1}^n \underbrace{\epsilon(a_1C^{(n)}_{0,j})}_{=0}a_2x^j = ax^n.$$
where we use Lemma \ref{property_et1} for $\epsilon(ag^n)=\epsilon(a)$, since $\epsilon_t(g)=1$ by $(iv)$ and $\epsilon(a_1C^{(n)}_{0,j})=0$ as by equation (\ref{eq_expand_power}), $a_1C^{(n)}_{0,j}\in \mathrm{span}\{a\delta(b):a,b\in R\}$ and $\epsilon(a\delta(b))=0$, by our additional assumption for $(b)\Rightarrow (a)$. 
Hence $H$ is coalgebra.

Now, we are going to show the weak counity property. Let $\alpha=\tilde{\alpha}x+a$, $\beta=\tilde{\beta}x+b$ and $\gamma=\tilde{\gamma}x+c \in H$. 
Using $\epsilon(HxH)=0$, what we have mentioned earlier, it is easy to check that $\epsilon(\alpha\beta\gamma)=\epsilon(abc)$. Note that
$\Delta(\beta)=\Delta(\tilde{\beta})(g\otimes x + x\otimes 1) + \Delta(b).$ 
Thus 
$$\epsilon(\alpha\beta_1)\epsilon(\beta_2\gamma) = \epsilon(\alpha\tilde{\beta}_1g)\underbrace{\epsilon(\tilde{\beta}_2x\gamma)}_{=0} + 
\underbrace{\epsilon(\alpha\tilde{\beta}_1x)}_{=0}\epsilon(\tilde{\beta}_2\gamma) + \epsilon(\alpha b_1)\epsilon(b_2\gamma) 
= \epsilon(\alpha b_1)\epsilon(b_2\gamma)  = \epsilon(ab_1)\epsilon(b_2c)
$$
Similarly $\epsilon(\alpha\beta_2)\epsilon(\beta_1\gamma)  = \epsilon(\alpha b_2)\epsilon(b_1\gamma)= \epsilon(ab_2)\epsilon(b_1c)$. Since $R$ is a weak bialgebra we have
$$\epsilon(\alpha\beta_1)\epsilon(\beta_2\gamma) = \epsilon(ab_1)\epsilon(b_2c)
= \epsilon(abc)=\epsilon(ab_2)\epsilon(b_1c)=\epsilon(\alpha\beta_2)\epsilon(\beta_1\gamma).$$
This shows that $H$ is a weak bialgebra extending the structure of $R$.

 \end{proof}

\begin{remark}\label{remark_epsilon_delta}
The extra condition $\epsilon(a\delta(b))=0$, for all $a,b\in R$ in the Theorem, seems to be strange and it is not clear to the authors whether it is really needed. 
If $g$  a group-like element and hence invertible, then $\epsilon_s(g)=1$ by Lemma \ref{basics_grplke}. Moreover, if $\delta$ is a $(g,1)$-coderivation and $\epsilon_s(g)=1$, then  
$\epsilon\delta=0$ by  Lemma \ref{coderivation_epsilon}. If we had $\delta(R_s)=0$ and $\sigma=\tau_\chi^l$ for some weak left character $\chi\in X_w^l(R)$, then for all $a,b\in R$:
$$\epsilon(a\delta(b)) = \epsilon(\epsilon_s(a)\delta(b)) = \epsilon(\sigma(\epsilon_s(a))\delta(b)) = \epsilon(\delta(\epsilon_s(a)b)) - \epsilon(\delta(\epsilon_s(a))b)= 0,$$
where we use equation (\ref{wcounit}) in the first equality, the $R_s$-linearity of $\tau^l_\chi$ in the second equality (see Lemma  \ref{chi_rs_linear_part3} ) and  $\delta(R_s)=0$ and $\epsilon\delta = 0$ in the last one.
Clearly $\delta(R_s)=0$ for an arbitrary Hopf algebra, or for $\delta=0$, i.e. in case  $R$ is a coseparable, cocommutative weak Hopf algebra like a groupoid algebra.\end{remark}

\begin{theorem}\label{main_result} Let $R$ be a weak Hopf algebra with antipode $S$, $\sigma$ an automorphism of $R$, $\delta$ an $\sigma$-derivation of $R$ and $g$ a group-like element.
	Suppose that $\delta(R_s)=0$. Then the following conditions are equivalent:
\begin{enumerate}
\item[(a)] The weak Hopf algebra structure of $R$ extends to a weak Hopf algebra structure on $H=R[x;\sigma,\delta]$, such that $x$ is $(g,1)$-primitive element and $S(x)=-S(g)x$.
\item[(b)] The following statements are satisfied:
\begin{enumerate}
	\item[(i)] $\sigma=\tau_\chi^l = Ad_g\tau_\chi^r$, for some character $\chi$ of $R$;
	\item[(ii)] $\delta$ is a $(g,1)$-coderivation;	
	\item[(iii)] $Ad_g S = \sigma S \sigma$;	
	\item[(iv)] $\delta S \sigma = \lambda_g S \delta$.
\end{enumerate}
\end{enumerate}
\end{theorem}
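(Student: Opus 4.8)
The plan is to build on Theorem~\ref{nec_cond_1}, which already handles the weak \emph{bialgebra} part, and then to isolate precisely what the existence of a compatible antipode on $H$ adds. First I would check that the hypotheses of Theorem~\ref{main_result}(b) (or (a)) let us invoke Theorem~\ref{nec_cond_1}: since $g$ is a group-like element and $\delta$ is a $(g,1)$-coderivation, Remark~\ref{remark_epsilon_delta} shows that the extra condition $\epsilon(a\delta(b))=0$ holds automatically once $\sigma=\tau_\chi^l$ and $\delta(R_s)=0$, so condition (i)+(ii) of (b) gives a weak bialgebra structure on $H$ with $x$ being $(g,1)$-primitive, and conversely (a) supplies one. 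So throughout we may assume $H$ is already a weak bialgebra extending $R$ with $\Delta(x)=\Delta(1)(g\otimes x+x\otimes 1)$, and the remaining task is to characterise when an antipode exists and satisfies $S(x)=-S(g)x$.

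\medskip
\noindent\textbf{(a)$\Rightarrow$(b).} Assume $H$ carries a weak Hopf algebra structure with $S(x)=-S(g)x$. Conditions (i) and (ii) come from the bialgebra part (Theorem~\ref{nec_cond_1}, via Proposition~\ref{nec_cond_0}); note that $\chi$ is a genuine character because $\sigma=\tau_\chi^l=Ad_g\tau_\chi^r$ forces $\tau_\chi^r=Ad_{g^{-1}}\sigma$ to be an automorphism and hence $\chi$ invertible (Lemmas~\ref{chi_rs_linear_part2}, \ref{chi_rs_linear_part3}). For (iii) and (iv) I would use that the antipode of any weak Hopf algebra is an anti-algebra, anti-coalgebra map: applying $S$ to the commutation relation $xa=\sigma(a)x+\delta(a)$ in $H$ (for $a\in R$) gives $S(a)S(x)=S(x)S(\sigma(a))+S(\delta(a))$, i.e.
\[
-S(a)S(g)x = -S(g)x\,S(\sigma(a)) + S(\delta(a)) = -S(g)\bigl(\sigma(S(\sigma(a)))x+\delta(S(\sigma(a)))\bigr)+S(\delta(a)).
\]
Comparing the coefficient of $x$ (using that $1,x$ are $R$-independent and left-multiplying by $S(g)^{-1}=g$) yields $-gS(a)S(g)x = -\sigma S\sigma(a)x$, that is $Ad_g S(a)=\sigma S\sigma(a)$ after re-deriving the identity $g S(a) S(g) g^{-1}\cdot(\dots)$ carefully — this is exactly (iii). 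The constant term then gives $g\,\delta(S\sigma(a)) = S(\delta(a))$... here one must be slightly careful: $S$ is an \emph{anti}-coalgebra map so $S\delta$ relates to a coderivation on the opposite coalgebra; the cleanest route is to use that $S$ restricted to $R$ is the antipode of $R$ and that $\delta S\sigma$ and $\lambda_g S\delta$ are both $R$-linear maps, then equate. The outcome is (iv), $\delta S\sigma=\lambda_g S\delta$.

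\medskip
\noindent\textbf{(b)$\Rightarrow$(a).} Given (i)--(iv), Theorem~\ref{nec_cond_1} produces the weak bialgebra structure on $H$. It remains to construct the antipode. The natural candidate is the map $\bar S:H\to H$ determined on the $R$-basis $\{x^n\}$ by
\[
\bar S(a_0+a_1x+\cdots+a_nx^n) \;=\; S(a_0) + S(a_1)\,(-S(g)x) + \cdots,
\]
more precisely $\bar S(ax^n)=S(x)^n S(a) = (-S(g)x)^n S(a)$, expanded back into the basis; one first checks that $\bar S$ is well defined and is an anti-algebra map of $H$, which amounts to verifying $\bar S(xa)=\bar S(a)\bar S(x)$ for $a\in R$, and this is exactly where conditions (iii) and (iv) are consumed. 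Then one verifies the three antipode axioms $\epsilon_t=\mu(\id\otimes\bar S)\Delta$, $\epsilon_s=\mu(\bar S\otimes\id)\Delta$ and $\bar S(r_1)r_2\bar S(r_3)=\bar S(r)$ on $H$; by multiplicativity/anti-multiplicativity it suffices to check them on the generators $x$ and on $R$ (where they hold because $S$ is the antipode of $R$), and on $x$ one computes $\mu(\id\otimes\bar S)\Delta(x)$ using $\Delta(x)=\Delta(1)(g\otimes x+x\otimes 1)$ and the definitions of $\epsilon_t$, $\epsilon_s$ on $H$ from the proof of Theorem~\ref{nec_cond_1}, together with Lemma~\ref{basics_grplke} ($gS(g)=\epsilon_t(g)=1$, $S(g)g=\epsilon_s(g)=1$). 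The powers-of-$x$ case follows by induction using equation~\eqref{eq_expand_power}.

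\medskip
\noindent\textbf{Main obstacle.} The delicate point is the well-definedness and anti-multiplicativity of $\bar S$ on all of $H$, i.e. showing $\bar S(x^n a) = \bar S(a)\bar S(x^n)$ for every $n$ and $a\in R$; reducing this to $n=1$ requires an induction that repeatedly feeds the commutation relation $xa=\sigma(a)x+\delta(a)$ through $\bar S$, and keeping the bookkeeping between $\sigma$, $S$, $\delta$ and $Ad_g$ straight (in the correct order, since $S$ reverses products) is where conditions (iii) $Ad_gS=\sigma S\sigma$ and (iv) $\delta S\sigma=\lambda_g S\delta$ are exactly the identities that make the $x$-coefficient and the constant coefficient match up. A secondary subtlety is the $S\delta$-versus-coderivation mismatch flagged above: because the antipode is an anti-coalgebra map, one should phrase the coderivation bookkeeping on $R^{\mathrm{cop}}$, or simply verify the needed consequence of (ii)+(iv) directly by applying $\epsilon$ and $\Delta$; the hypothesis $\delta(R_s)=0$ is what guarantees $\epsilon\delta=0$ (Lemma~\ref{coderivation_epsilon}, via $\epsilon_s(g)=\epsilon_s(1)=1$) and hence the compatibility of $\bar S$ with the counit $\epsilon$ of $H$.
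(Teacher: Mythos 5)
Your proposal follows essentially the same route as the paper: reduce the bialgebra part to Theorem~\ref{nec_cond_1} (using Remark~\ref{remark_epsilon_delta} to dispose of the hypothesis $\epsilon(a\delta(b))=0$), extract (iii) and (iv) by pushing the commutation relation $xa=\sigma(a)x+\delta(a)$ through the anti-algebra map $S$ and comparing the coefficients of $x$ and $1$, and conversely build the antipode on $H$ via the universal property of the Ore extension with $S(x)=-S(g)x$ and verify the three antipode axioms by induction on monomials $ax^n$. The only slips are cosmetic: the constant-term identity should read $S(g)\delta(S\sigma(a))=S(\delta(a))$ (so $g^{-1}=S(g)$, not $g$, multiplies $\delta S\sigma$, which after clearing gives $\delta S\sigma=\lambda_g S\delta$), and the antipode axioms cannot be checked ``on generators'' by anti-multiplicativity alone --- but your fallback to induction over the monomials $ax^n$ is exactly what the paper does.
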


\begin{proof} $(a)\Rightarrow (b)$ 
	By Proposition \ref{nec_cond_0}, $\sigma=\tau_\chi^l$ for some weak left characters $\chi$ and $\delta$ is a $(g,1)$-coderivation. Hence by Remark \ref{remark_epsilon_delta}, $\epsilon(a\delta(b))=0$ for all $a,b\in R$. Thus, we can apply Theorem \ref{nec_cond_1} to obtain that $\chi$ is a character. 
	Under the assumption that $S(x)=-S(g)x$ and using the commutation rule in $H=R[x;\sigma,\delta]$ we calculate for any $a\in R$:
\begin{eqnarray*}
S(a)S(g)x &=& -S(xa) \\
&=& -S(\sigma(a)x) - S(\delta(a)) \\
&=& S(g)xS(\sigma(a)) - S(\delta(a)) 
= S(g)\sigma(S(\sigma(a)))x +[S(g)\delta(S(\sigma(a)))-S(\delta(a))]
\end{eqnarray*}
Comparing the coefficients of $x$ and $1$ leads to the equations
$S(a)S(g) = S(g)\sigma(S(\sigma(a)))$ and $S(g)\delta(S(\sigma(a)))=S(\delta(a))$ for all $a\in R$. Hence $Ad_gS = \sigma S \sigma$, since $S(g)=g^{-1}$  by Lemma \ref{basics_grplke}, and $\delta S\sigma = \lambda_g S\delta$.

$(b)\Rightarrow (a)$: Let $\sigma=\tau_\chi^l$ for some character $\chi$. Using the assumptions $\delta(R_s)=0$ and $\epsilon \circ \delta = 0$, we have  $\epsilon(a\delta(b))=0$ for all $a,b\in R$, 
by  Remark \ref{remark_epsilon_delta}. Hence by Proposition \ref{nec_cond_1} the weak bialgebra structure of $R$ extends to $H=R[x;\sigma,\delta]$ with 
$x$ being $(g,1)$-primitive and $\epsilon(Hx)=0$. Since $\epsilon(axb)=\epsilon(a\sigma(b)x)+\epsilon(a\delta(b))=0$ for all $a,b\in R$ we also have $\epsilon(HxH)=0$.

Let $\gamma=-S(g)x$. Using properties $(iii)$ and $(iv)$ we calculate for all $a\in R$:
\begin{eqnarray*}
 S(a)\gamma 
 &=& -S(a)S(g)x - S(\delta(a))+ S(\delta(a)) \\
 &=& -S(g)\sigma(S(\sigma(a)))x - S(g)\delta(S(\sigma(a))) + S(\delta(a))\\
 &=& -S(g)(xS(\sigma(a))) + S(\delta(a))\\
 &=& \gamma S(\sigma(a)) + S(\delta(a)) 
\end{eqnarray*}
Hence by the universal property of the Ore extension, there exists a unique unital anti-algebra homomorphism $S:H\rightarrow H$ that extends $S$ and sends $x$ to $S(x)=\gamma=-S(g)x.$ We will verify the antipode axioms of a weak Hopf algebra.
It is clear that $a_1S(a_2)=\epsilon_t(a)$, $S(a_1)a_2=\epsilon_s(a)$ and $S(a_1)a_2S(a_3)=S(a)$ holds for all $a\in R$.
For any $h\in H$ we have
$\epsilon_t(hx)=\epsilon(1_1hx)1_2=0$, since $\epsilon(Hx)=0$. Therefore
$$(hx)_1S((hx)_2) = h_1gS(h_2x)+h_1xS(h_2) = -h_1gS(g)xS(h_2)+h_1xS(h_2)=0=\epsilon_t(hx),$$
as $gS(g)=\epsilon_t(g)=1$ by assumption $(2.i)$. 

Similar, $\epsilon_s(hx)=\epsilon(hx1_1)1_2=0$, since $\epsilon(HxH)=0$.
Furthermore note that the elements of $R_s$ commute with $x$, because $\sigma=\tau^l_\chi$ is $R_s$-linear by Lemma \ref{chi_rs_linear_part3}  and $\delta(R_s)=0$ by assumption, i.e.
$xa=\sigma(a)x+\delta(a)=ax$ for any $a\in R_s$.
Hence using $S(x)=-S(g)x$ and $S(a_1)a_2\in R_s$ we calculate for any $a\in R$:
$S((ax)_1)(ax)_2 = S(a_1g)a_2x + S(a_1x)a_2= S(g)(\epsilon_s(a)x-x\epsilon_s(a)) = 0.$
Suppose that $S(h_1)h_2=0$ for all $h=ax^n$ with $a\in R$ and some $n\geq 0$. Then we also calculate:
$S((hx)_1)(hx)_2 = S(h_1g)h_2x + S(h_1x)h_2= S(g)S(h_1)h_2x+S(x)S(h_1)h_2=0.$
Hence by induction on $n$ we have that $S(h_1)h_2=\epsilon_s(h)$ for any $h=ax^n$ with $a\in R$ and $n\geq 0$.
In particular we have that $H_s=\epsilon_s(H) = \epsilon_s(R)=R_s$, i.e. all elements of $H_s$ commute with $x$ in $H$.

Finally we will show 
\begin{equation}\label{third_antipode} S(h_1)h_2S(h_3)=S(h)\end{equation} for all $h\in H$. Again it is enough to show this only for the monomials $h=ax^n$ with $n\geq 0$.
For $n=0$ and $h=a\in R$ we have $S(h_1)h_2S(h_3)=S(h)$ since $S$ extends the antipode of $R$. 
Suppose that (\ref{third_antipode}) holds for all elements $h=ax^n$ for some $n\geq 0$. Then
\begin{eqnarray*}
S((hx)_1)(hx)_2S((hx)_3) &=& S(h_1g)h_2gS(h_3x) + S(h_1g)h_2xS(h_3) + S(h_1x)h_2S(h_3)\\
 &=& S(g)S(h_1)h_2gS(x)S(h_3) + S(g)S(h_1)h_2xS(h_3) + S(x)S(h_1)h_2S(h_3) \\
  &=& -S(g)\epsilon_s(h_1)gS(g)xS(h_2) + S(g)\epsilon_s(h_1)xS(h_2) + S(x)\epsilon_s(h_1)S(h_2) \\
    &=& -S(g)x\epsilon_s(h_1)S(h_2) + S(g)x\epsilon_s(h_1)S(h_2) + S(x)S(h) \\
    &=& S(x)S(h) =S(hx)
\end{eqnarray*}

\end{proof}

\begin{remark}
Condition $(b.iii)$ is automatically satisfied in case $g$ is invertible and the inverse of $\chi$ is of the form $\chi S$, which is always true for an ordinary Hopf algebra. This is so, because in this case, by Corollary \ref{chi_rs_linear}, one has that $S=\tau_\chi^r   S \tau_\chi^l$. Since by condition $(b.i$), $\sigma=\tau_\chi^l=Ad_g\tau_\chi^r$, we have
$Ad_g S = \sigma S \sigma$, i.e. $gS(a)S(g) = \sigma(S(\sigma(a)))$, for all $a\in R$. 
\end{remark}

\section{Ore extensions of connected groupoid algebras}

We finish by discussing the Ore extensions of the groupoid algebra $k\cG$ of a ``connected'' groupoid $\cG$, by which we mean a groupoid such that any two objects are connected by a morphism. We furthermore assume the set of objects is finite. Then $R=k\cG=M_n(kG)$, for some $n>0$ and some group $G$.
Panov characterised such Ore extensions for the case $n=1$ (see \cite[Proposition 2.2]{Panov}).

\begin{proposition} Let $R$ be a cocommutative weak Hopf algebra, with automorphism $\sigma$ and $\sigma$-derivation $\delta$ such that the weak Hopf algebra structure of $R$ extends to one of $H=R[x;\sigma,\delta]$  with $x$ being an $(g,1)$-primitive element, for some group-like element $g$, such that $S(x)=-S(g)x$, $\delta(R_s)=0$ and the inverse of $\chi$ is $\chi S$, then $g$ is central.
\end{proposition}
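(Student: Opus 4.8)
The plan is to reduce the statement to Theorem \ref{main_result} and then to cancel an automorphism using cocommutativity. First I would observe that the standing hypotheses — the weak Hopf algebra structure of $R$ extends to $H=R[x;\sigma,\delta]$ with $x$ an $(g,1)$-primitive element, $S(x)=-S(g)x$, together with $\delta(R_s)=0$ and the fact that the inverse of $\chi$ in $R^*$ is $\chi S$ — are precisely the data in condition (a) of Theorem \ref{main_result} (whose standing assumption $\delta(R_s)=0$ is also part of our hypotheses). Hence Theorem \ref{main_result} applies and yields condition (b); in particular (b)(i) produces a character $\chi$ of $R$ with $\sigma = \tau_\chi^l = Ad_g\,\tau_\chi^r$.

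Next I would use cocommutativity of $R$. For every $a\in R$ we have $\tau_\chi^l(a)=\chi(a_1)a_2 = \chi(a_2)a_1 = a_1\chi(a_2)=\tau_\chi^r(a)$, so $\tau_\chi^l=\tau_\chi^r$ as endomorphisms of $R$. Substituting this into (b)(i) gives the identity $\tau_\chi^l = Ad_g\circ \tau_\chi^l$ of maps $R\to R$.

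Finally, since $\chi$ is a character it lies in $U(R^*)$, so by Lemma \ref{chi_rs_linear_part3}(2) the winding map $\tau_\chi^l$ is an automorphism of $R$; composing the identity $Ad_g\circ\tau_\chi^l = \tau_\chi^l$ on the right with $(\tau_\chi^l)^{-1}$ gives $Ad_g = \id_R$, that is, $gag^{-1}=a$ for all $a\in R$, so $g$ is central. The argument is short; the only delicate point is ensuring $\tau_\chi^l$ is genuinely invertible so that the cancellation is legitimate, which is why we invoke Theorem \ref{main_result} (giving a true character $\chi\in U(R^*)$) rather than the weaker Proposition \ref{nec_cond_0}. I do not anticipate a substantive obstacle beyond bookkeeping of which hypotheses feed into Theorem \ref{main_result}.
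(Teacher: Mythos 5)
Your proposal is correct and follows essentially the same route as the paper: both invoke Theorem \ref{main_result} to obtain $\sigma=\tau_\chi^l=Ad_g\tau_\chi^r$ for a character $\chi$, use cocommutativity to identify $\tau_\chi^l$ with $\tau_\chi^r$, and then cancel the automorphism to conclude $Ad_g=\id$. The only cosmetic difference is that the paper performs the cancellation concretely by applying $\tau_{\chi S}^l=\sigma^{-1}$ (using that $\chi S$ inverts $\chi$), whereas you cancel $\tau_\chi^l$ abstractly; both are legitimate since $\tau_\chi^l=\sigma$ is an automorphism by hypothesis.
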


\begin{proof}
Since $\sigma(a)=\tau_\chi^l(a)=Ad_g\tau_\chi^r(a)$ by Theorem \ref{main_result}, we have
$$a=\chi(S(a_1))\sigma(a_2) = Ad_g\sigma(\sigma^{-1}(a)) = Ad_g(a).$$
Hence $ag=ga$, for all $a\in R$. \end{proof}

The Proposition generalises \cite{Panov}*{Corollary 1.4}. Moreover, the last Proposition applies in particular to groupoid algebras $k\cG$ of a connected groupoid with finitely many objects, i.e. $R=M_n(kG)$ for some $n>0$ and group $G$. Generalising Example \ref{example_matrix}, any weak group-like element is of the form $
\gamma=\sum_{i\in I} g_i E_{i,\sigma(i)}$ for some subset $I\subseteq \{1,\ldots, n\}$ and injective map $\sigma:I\rightarrow \{1,\ldots, n\}$ and elements $g_i\in G$. Moreover, if $\gamma$ has a left inverse, then $I=\{1,\ldots, n\}$ and $\sigma$ has to be a bijection. In particular $\gamma$ has also a right inverse which is 
$S(\gamma)=\sum_{i=1}^n g_i^{-1} E_{\sigma(i),i}.$ Furthermore the  characters $\chi\in X_w(R)$ of $R=M_n(kG)$ are of the form $\chi(gE_{ij})=p_i^{-1}p_j \rho(g)$, where $\rho\in kG^*$ is a group character and $p_1,\ldots, p_n\in k^{\times}$ are non-zero scalars. Clearly $\chi S$ is the inverse of $\chi$.

Let $R$ be a cocommutative weak Hopf algebra, $\chi \in X(R)$ and $g$ a central group-like element of $R$. Suppose that  $\alpha\in R^*$  satisfies 
\begin{equation} \alpha(ab)=\alpha(a)\epsilon(b) + \chi(a)\alpha(b), \qquad \forall a,b\in R. \end{equation}
Then $\tau_\alpha^l$ satisfies
\begin{equation} \tau_\alpha^l(ab)=\alpha(a_1b_1)a_2b_2 = \alpha(a_1)a_2\epsilon(b_1)b_2 + \chi(a_1)a_2\alpha(b_1)b_2 = \tau_\alpha^l(a) b + \tau_\chi^l(a)\tau_\alpha^l(b).\end{equation}
Define the map $\delta:R\rightarrow R$, by $\delta(a):=(1-g)\tau_{\alpha}^l(a)$, for all $a\in R$.
Since $1-g$ is central, we have $\delta(ab)=\delta(a)b + \tau_\chi^l(a)\delta(b)$, i.e. $\delta$ is an $\tau_\chi^l$-derivation.
Furthermore $\delta$ is a $(g,1)$-coderivation, because 
\begin{eqnarray*}
\Delta\delta(a) &=& \alpha(a_1)\Delta(1-g)\Delta(a_2)\\
& =&  \alpha(a_1)( (1-g)\otimes 1 + g\otimes(1-g))\Delta(a_2)\\
& = &  \alpha(a_1)(1 - g)a_2 \otimes a_3 +  g a_1 \otimes (1 - g) \alpha(a_2)a_3\\
&=&  \delta(a_1)\otimes a_2 + ga_1 \otimes \delta(a_2) = (\delta\otimes \id + \lambda_g\otimes \delta)\Delta(a)
\end{eqnarray*}  
where we use the cocommutativity in the third equation. Note that $Ad_g=\id$ as $g$ is central and $\tau_\chi^l=Ad_g\tau_\chi^r = \tau_\chi^r$, as $R$ is cocommutative.

\begin{theorem} Let $G$ be any group, $n>0$, $R=M_n(kG)$, $\chi \in X(R)$ and $g$ a central element of $G$. 
Suppose that there exists $\alpha \in R^*$ such that $\alpha(ab)=\alpha(a)\epsilon(b) + \chi(a)\alpha(b)$ holds for any $a,b\in R$ and $\alpha(E_{ii})=0$ for any $i$. Let $\sigma=\tau_\chi^l$ and $\delta(a):= (1-g)\tau_\alpha^l(a)$, for all $a\in R$.
Then  $\delta$ is a $\sigma$-derivation and a $(g,1)$-coderivation such that
$R[x;\tau_\chi^l,\delta]$ becomes a weak Hopf algebra with 
$$\Delta(x)=\Delta(1)(g\otimes x + x\otimes 1), \qquad \epsilon(HxH)=0,\qquad S(x)=-g^{-1}x.$$
\end{theorem}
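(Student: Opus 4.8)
We verify that the data $(\sigma,\delta,g,\chi)$ satisfies condition $(b)$ of Theorem~\ref{main_result} and then apply that theorem. First, $R=M_n(kG)=M_n(k)\otimes kG$ is cocommutative, hence $\tau_\chi^l=\tau_\chi^r$, and $g$ (regarded as $1\otimes g$) is a central group-like element of $R$, so $Ad_g=\id$. The discussion preceding the statement already shows that $\delta=(1-g)\tau_\alpha^l$ is a $\tau_\chi^l$-derivation of $R$ and a $(g,1)$-coderivation, and that $\sigma=\tau_\chi^l=Ad_g\tau_\chi^r$; since $\chi\in X(R)$ this is exactly $(b.i)$ and $(b.ii)$, and $\tau_\chi^l$ is an algebra automorphism by Lemma~\ref{chi_rs_linear_part3}, so $H=R[x;\sigma,\delta]$ is well defined. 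Computing as in Example~\ref{example_matrix} one finds $R_s=R_t=\mathrm{span}\{E_{ii}\}$; hence the hypothesis $\alpha(E_{ii})=0$ means $\alpha$ vanishes on $R_t$ and on $R_s$, and in particular $\delta(E_{ii})=(1-g)\alpha(E_{ii})E_{ii}=0$, so $\delta(R_s)=0$, the standing assumption of Theorem~\ref{main_result}.

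For $(b.iii)$, since $Ad_g=\id$ we must show $S=\sigma S\sigma=\tau_\chi^l S\tau_\chi^l$. As noted just before the statement, the characters of $M_n(kG)$ have convolution inverse of the form $\chi S$; thus Corollary~\ref{chi_rs_linear}(ii) gives $S=\tau_\chi^l S\tau_\chi^r$, and cocommutativity rewrites the right-hand side as $\tau_\chi^l S\tau_\chi^l$.

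The heart of the argument is $(b.iv)$, namely $\delta S\sigma=\lambda_gS\delta$. Writing $\delta=(1-g)\tau_\alpha^l$, using that $S$ is an anti-algebra map with $S(1)=1$ and $S(g)=g^{-1}$ (the latter by Lemma~\ref{basics_grplke}), and that $1-g$ is central, the required identity reduces to $\tau_\alpha^l\circ S\circ\tau_\chi^l=-\,S\circ\tau_\alpha^l$ (if $g=1$ then $\delta=0$ and there is nothing to prove). I would prove this in two steps. First, establish in $R^*$ the convolution identity $\chi*(\alpha\circ S)=-\alpha$: applying the defining relation of $\alpha$ to $a_1\otimes S(a_2)$ and using the standard identity $\epsilon\circ S=\epsilon$ together with the antipode axiom $a_1S(a_2)=\epsilon_t(a)$ yields $\alpha(\epsilon_t(a))=\alpha(a)+\bigl(\chi*(\alpha S)\bigr)(a)$, and the left side is $0$ because $\epsilon_t(a)\in R_t$ and $\alpha|_{R_t}=0$. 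Second, expand $\tau_\alpha^l\bigl(S(\tau_\chi^l(a))\bigr)$ using the anti-coalgebra-map property $\Delta S=(S\otimes S)\Delta^{\mathrm{op}}$; this gives $\chi(a_1)\,(\alpha S)(a_3)\,S(a_2)$, and then cocommutativity (to interchange the second and third coproduct legs) together with coassociativity and $\chi*(\alpha S)=-\alpha$ turns this into $-\alpha(a_1)S(a_2)=-S(\tau_\alpha^l(a))$, as wanted.

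Having checked $(b.i)$–$(b.iv)$ and $\delta(R_s)=0$, Theorem~\ref{main_result} provides a weak Hopf algebra structure on $H=R[x;\tau_\chi^l,\delta]$ extending that of $R$, with $x$ a $(g,1)$-primitive element, so $\Delta(x)=\Delta(1)(g\otimes x+x\otimes 1)$, with $\epsilon(HxH)=0$ (which is established within the proof of that theorem), and with $S(x)=-S(g)x=-g^{-1}x$. I expect the main obstacle to be $(b.iv)$, and within it the correct bookkeeping of the three coproduct legs: one must combine the identity $\chi*(\alpha S)=-\alpha$ with cocommutativity in exactly the right order, and it is precisely here that the hypothesis $\alpha(E_{ii})=0$, equivalently $\alpha|_{R_t}=0$, is used.
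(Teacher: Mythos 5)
Your proposal is correct, and its overall skeleton (verify $\delta(R_s)=0$ and conditions (b.i)--(b.iv) of Theorem~\ref{main_result}, with (i)--(iii) coming from the preceding discussion, Corollary~\ref{chi_rs_linear} and cocommutativity) coincides with the paper's. Where you genuinely diverge is in the only nontrivial step, condition (b.iv). The paper proves $\delta S\sigma=\lambda_g S\delta$ by an explicit computation on the basis $\{g_{ij}E_{ij}\}$ of $M_n(kG)$, whose engine is the relation $\alpha(g_{ij}E_{ij})=-\chi(g_{ij}E_{ij})\alpha(g_{ij}^{-1}E_{ji})$ obtained by applying the twisted-derivation property of $\alpha$ to $E_{ii}=g_{ij}E_{ij}\cdot g_{ij}^{-1}E_{ji}$. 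Your convolution identity $\chi*(\alpha S)=-\alpha$ is exactly that relation evaluated on the group-like basis elements, but you derive it in a basis-free way from $a_1S(a_2)=\epsilon_t(a)$, $\epsilon S=\epsilon$ and $\alpha|_{R_t}=0$, and then conclude via $\Delta S=(S\otimes S)\Delta^{\mathrm{op}}$, coassociativity and cocommutativity. This buys generality: your argument for (b.iv) works verbatim for any cocommutative weak Hopf algebra with $\alpha$ vanishing on $R_t$ and $g$ central group-like, not just for $M_n(kG)$, and it isolates the conceptual reason the computation closes up, namely that $\alpha S$ is a convolution ``negative'' of $\alpha$ twisted by $\chi$. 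The paper's computation is more elementary and self-contained but tied to the matrix-unit basis. One small point of precision: the passage from $\delta S\sigma=\lambda_gS\delta$ to $\tau_\alpha^l S\tau_\chi^l=-S\tau_\alpha^l$ is only a sufficiency reduction, since $1-g$ is in general a zero divisor and cannot be cancelled; as you prove the stronger identity and then multiply back by $1-g$ (using centrality and $S(1-g)=1-g^{-1}$), the logic is sound, but ``reduces to'' should be read as ``it suffices to prove''.
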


\begin{proof}

We first observe that $\delta(R_s) = 0$. In fact, since every element of $R_s$ is of the form $\sum_i \lambda_iE_{ii}$, with $\lambda_i \in k$, we have
$\delta\left(\sum_i \lambda_iE_{ii}\right) = \sum_i \lambda_i\delta(E_{ii}) = \sum_i \lambda_i (1-g)\alpha(E_{ii})E_{ii} = 0.$
Hence, in order to obtain our result, it is enough to check the conditions of the implication $(b)\Rightarrow (a)$ of Theorem \ref{main_result}. Conditions $(i)$ and $(ii)$ hold by precedent discussion and because $g$ is an invertible central element. Also, since $\chi S$ is the inverse of $\chi$, the condition $(iii)$ follows by Corollary \ref{chi_rs_linear}. Thus, the only work to do is to prove the condition $(iv)$.

We begin by noting that under our hypotheses we have $\alpha(g_{ij}E_{ij}) = -\chi(g_{ij}E_{ij})\alpha(g_{ij}^{-1}E_{ji})$. In fact, because 
\begin{eqnarray*}
0 &=& \alpha(E_{ii}) = \alpha(g_{ij}E_{ij}g_{ij}^{-1}E_{ji}) = \alpha(g_{ij}E_{ij})\epsilon(g_{ij}^{-1}E_{ji}) + \chi(g_{ij}E_{ij})\alpha(g_{ij}^{-1}E_{ji}) \\
&=& \alpha(g_{ij}E_{ij}) + \chi(g_{ij}E_{ij})\alpha(g_{ij}^{-1}E_{ji}).
\end{eqnarray*}

Let $a = \sum_{i,j} g_{ij}E_{ij} \in R$. Then we obtain
\begin{eqnarray*}
S(g)\delta(S(\sigma(a))) &=& g^{-1}\sum_{i,j=1}^{n}\chi(g_{ij}E_{ij})\delta(S(g_{ij}E_{ij})) = g^{-1}\sum_{i,j=1}^{n}\chi(g_{ij}E_{ij})\delta(g_{ij}^{-1}E_{ji}) \\
&=&  g^{-1}\sum_{i,j=1}^{n}\chi(g_{ij}E_{ij})\delta(g_{ij}^{-1}E_{ji}) = g^{-1}\sum_{i,j=1}^{n}\chi(g_{ij}E_{ij})(1-g)\alpha(g_{ij}^{-1}E_{ji})g_{ij}^{-1}E_{ji} \\
&=& (g^{-1}-1)\sum_{i,j=1}^{n}\chi(g_{ij}E_{ij})\alpha(g_{ij}^{-1}E_{ji})g_{ij}^{-1}E_{ji} = (1-g^{-1})\sum_{i,j=1}^{n}\alpha(g_{ij}E_{ij})g_{ij}^{-1}E_{ji} \\
&=& \sum_{i,j=1}^{n}\alpha(g_{ij}E_{ij})(g_{ij}^{-1}E_{ji} - g^{-1}g_{ij}^{-1}E_{ji}) = \sum_{i,j=1}^{n}\alpha(g_{ij}E_{ij})S((1-g)g_{ij}E_{ij}) \\
&=& \sum_{i,j=1}^{n} S((1-g)\alpha(g_{ij}E_{ij})g_{ij}E_{ij}) = S(\delta(\sum_{i,j=1}^{n} g_{ij}E_{ij})) \\ &=& S(\delta(a)).
\end{eqnarray*}
Multiplying by $g$ from the left yields $\delta(S(\sigma(a))) = gS(\delta(a))$ and the proof is complete.

\end{proof}

\section*{References}

 \begin{biblist}
 \bib{Boehm}{article}{
   author={B\"ohm, Gabriella},
   author={Nill, Florian},
   author={Szlach\'anyi, Korn\'el},
   title={Weak Hopf algebras. I. Integral theory and $C^*$-structure},
   journal={J. Algebra},
   volume={221},
   date={1999},
   number={2},
   pages={385--438},
   issn={0021-8693},
   review={\MR{1726707}},
   doi={10.1006/jabr.1999.7984},
}

\bib{BrownGoodearl}{book}{
   author={Brown, Ken A.},
   author={Goodearl, Ken R.},
   title={Lectures on algebraic quantum groups},
   series={Advanced Courses in Mathematics. CRM Barcelona},
   publisher={Birkh\"auser Verlag, Basel},
   date={2002},
   pages={x+348},
   isbn={3-7643-6714-8},
   review={\MR{1898492}},
   doi={10.1007/978-3-0348-8205-7},
}

\bib{Brown}{article}{
   author={Brown, K. A.},
   author={O'Hagan, S.},
   author={Zhang, J. J.},
   author={Zhuang, G.},
   title={Connected Hopf algebras and iterated Ore extensions},
   journal={J. Pure Appl. Algebra},
   volume={219},
   date={2015},
   number={6},
   pages={2405--2433},
   issn={0022-4049},
   review={\MR{3299738}},
   doi={10.1016/j.jpaa.2014.09.007},
}

\bib{BrzezinskiWisbauer}{book}{
   author={Brzezinski, Tomasz},
   author={Wisbauer, Robert},
   title={Corings and comodules},
   series={London Mathematical Society Lecture Note Series},
   volume={309},
   publisher={Cambridge University Press, Cambridge},
   date={2003},
   pages={xii+476},
   isbn={0-521-53931-5},
   review={\MR{2012570}},
   doi={10.1017/CBO9780511546495},
}

\bib{Doi}{article}{
   author={Doi, Yukio},
   title={Homological coalgebra},
   journal={J. Math. Soc. Japan},
   volume={33},
   date={1981},
   number={1},
   pages={31--50},
   issn={0025-5645},
   review={\MR{597479}},
   doi={10.2969/jmsj/03310031},
}

\bib{Nikshych}{article}{
   author={Nikshych, Dmitri},
   title={On the structure of weak Hopf algebras},
   journal={Adv. Math.},
   volume={170},
   date={2002},
   number={2},
   pages={257--286},
   issn={0001-8708},
   review={\MR{1932332}},
   doi={10.1006/aima.2002.2081},
}

\bib{NikshychVainerman}{article}{
   author={Nikshych, Dmitri},
   author={Vainerman, Leonid},
   title={Finite quantum groupoids and their applications},
   conference={
      title={New directions in Hopf algebras},
   },
   book={
      series={Math. Sci. Res. Inst. Publ.},
      volume={43},
      publisher={Cambridge Univ. Press, Cambridge},
   },
   date={2002},
   pages={211--262},
   review={\MR{1913440}},
}

\bib{Panov}{article}{
   author={Panov, A. N.},
   title={Ore extensions of Hopf algebras},
   language={Russian, with Russian summary},
   journal={Mat. Zametki},
   volume={74},
   date={2003},
   number={3},
   pages={425--434},
   issn={0025-567X},
   translation={
      journal={Math. Notes},
      volume={74},
      date={2003},
      number={3-4},
      pages={401--410},
      issn={0001-4346},
   },
   review={\MR{2022506}},
   doi={10.1023/A:1026115004357},
}
	
\end{biblist}
\end{document}